\newtheorem{thm}{Theorem}
\newtheorem{defn}[thm]{Definition}
\newtheorem{rem}[thm]{Remark}
\newtheorem{cor}[thm]{Corollary}
\newtheorem{prop}[thm]{Proposition}
\newcommand{\bl}{\begin{lem}}
\newcommand{\el}{\end{lem}}
\newcommand{\bml}{\begin{multline}}
\newcommand{\eml}{\end{multline}}
\newcommand{\beq}{\begin{equation}}
\newcommand{\eeq}{\end{equation}}
\newcommand{\bp}{\begin{prop}}
\newcommand{\ep}{\end{prop}}
\newcommand{\bd}{\begin{defn}}
\newcommand{\ed}{\end{defn}}
\newcommand{\pf}{\begin{proof}}
\newcommand{\epf}{\end{proof}}
\newcommand{\field}[1]{\ensuremath{\mathbb{#1}}}
\newcommand{\CC}{\field{C}}
\newcommand{\df}{=}
\newcommand{\NN}{\field{N}}
\newcommand{\PP}{\field{P}}
\newcommand{\RR}{\field{R}}
\newcommand{\ZZ}{\field{Z}}
\DeclareMathOperator{\hyp}{hyp}
\let\Re\relax
\DeclareMathOperator{\Re}{Re}
\newcommand{\mC}{\mathcal{C}}
\newcommand{\ord}{\mathrm{ord}}
\DeclareMathOperator{\en}{\mathnormal{\mathcal{E}(T)}}
\DeclareMathOperator{\gip}{\Gamma ^ \prime _\infty   }
\DeclareMathOperator{\gi}{\Gamma_\infty   }
\DeclareMathOperator{\vol}{vol}
\DeclareMathOperator{\R}{Re}
\DeclareMathOperator{\pc}{PSL(2,\CC)}
\DeclareMathOperator{\lp}{\Delta}
\newcommand{\Z}{\mathcal{Z}}
\theoremstyle{definition}
\theoremstyle{remark}
\numberwithin{equation}{section}
\begin{document}

\title[Superzeta functions]{Superzeta functions, regularized products, and the Selberg zeta function
on hyperbolic manifolds with cusps}


\author{Joshua S. Friedman}\thanks{First-named author: The views expressed in this article are the author's own and not those of the U.S. Merchant Marine Academy,
the Maritime Administration, the Department of Transportation, or the United States government.}
\address{Joshua S. Friedman \\
Department of Mathematics and Science \\
\textsc{United States Merchant Marine Academy} \\
300 Steamboat Road \\
Kings Point, NY 11024 \\
U.S.A.
}
\email{friedmanj@usmma.edu, CrownEagle@gmail.com}

\author{Jay Jorgenson}\thanks{Second-named author's research is supported
by NSF and PSC-CUNY grants.}
\address{Jay Jorgenson \\
Department of Mathematics \\
The City College of New York \\
Convent Avenue at 138th Street \\
New York, NY 10031
U.S.A.
}
\email{jjorgenson@mindspring.com}

\author{Lejla Smajlovi\'{c}}
\address{Lejla Smajlovi\'c \\
Department of Mathematics \\
University of Sarajevo\\
Zmaja od Bosne 35, 71 000 Sarajevo\\
Bosnia and Herzegovina}
\email{lejlas@pmf.unsa.ba}



\subjclass[2010]{Primary 11M36}

\date{}

\begin{abstract}\noindent
Let $\Lambda = \{\lambda_{k}\}$ denote a sequence of complex numbers and
assume that that the counting function
$\#\{\lambda_{k} \in \Lambda ~|~ \vert \lambda_{k}\vert < T\} =O(T^{n})$ for some integer $n$.
From Hadamard's theorem, we can construct an entire function $f$ of order at most $n$ such that
$\Lambda$ is the divisor $f$.  In this article we prove, under reasonably general conditions, that the superzeta function
$\Z_{f}(s,z)$ associated to $\Lambda$ admits a meromorphic continuation.  Furthermore, we describe the relation
between the regularized product of the sequence $z-\Lambda$ and the function $f$ as constructed as a Weierstrass product.
In the case $f$ admits a Dirichlet series expansion in some right half-plane, we derive the
meromorphic continuation in $s$ of $\Z_{f}(s,z)$ as an integral transform of $f'/f$.  We apply these
results to obtain superzeta product evaluations of Selberg zeta function associated to finite
volume hyperbolic manifolds with cusps.
\end{abstract}

\maketitle

\section{Introduction}

\subsection{Background discussion}
The zeta-regularized product approach to define the product of an infinite sequence of complex numbers is,
at this point, a well-accepted mathematical concept.  Let $\Lambda = \{\lambda_{k}\}$ denote a sequence of
complex numbers and assume that that the counting function $\#\{\lambda_{k} \in \Lambda ~|~ \vert \lambda_{k}\vert < T\} =O(T^{n})$
for some integer $n$.  For any complex number $s$ with $\text{\rm Re}(s) > n$, the function
$$
\zeta_{\Lambda}(s) = \sum\limits_{\lambda_{k} \in \Lambda, \lambda_{k} \neq 0}\lambda_{k}^{-s}
$$
is convergent.  Under additional conditions, $\zeta_{\Lambda}(s)$ admits a meromorphic continuation to a region
which includes $s=0$ and is holomorphic at $s=0$.  In such a case, one defines the zeta-regularized product
of $\Lambda$, which is denoted by $\det^{\ast}(\Lambda)$, to be $\text{\rm det}^{\ast}(\Lambda) = \exp(-\zeta_{\Lambda}'(0))$
where the prime denotes a derivative with respect to the variable $s$.  The use of the term \it determinant \rm stems from
example when $\Lambda$ is a finite sequence of non-zero numbers associated to a finite dimensional linear operator $T$ in which
case $\exp(-\zeta_{\Lambda}'(0))$ is, indeed, the determinant of $T$.  In general, the use of the asterisk signifies that
one considers only the non-zero elements of $\Lambda$.

As stated, the zeta-regularized product of $\Lambda$ requires that $\Lambda$ satisfies additional conditions
which are listed as general axioms in \cite{JL93b}.  One of the conditions is that the associated theta function
$\displaystyle \theta_{\Lambda}(t) = \sum_{\lambda_{k} \in \Lambda} e^{-\lambda_{k}t}$ is defined for real and positive
values of $t$ and admits an asymptotic expansion as $t$ approaches zero.  This condition allows one to compute $\zeta_{\Lambda}(s)$
as the Mellin transform of $\theta_{\Lambda}(t)$.  If $\Lambda$ is the sequence of eigenvalues of some Laplacian operator $\mathbf \Delta$
acting of a Hilbert space associated to compact Riemannian manifolds, then one views $\det^{\ast}(\Lambda)$ as the
determinant of $\mathbf \Delta$ and one writes $\det^{\ast}{\mathbf \Delta} = \det^{\ast}(\Lambda)$.

In \cite{RaySing73} the authors consider, among other ideas, the zeta-regularized product of the sequence of eigenvalues
associated to the Laplacians acting on smooth sections of flat line bundles over genus one curves, when equipped with
a unit volume flat metric, and higher genus Riemann surfaces, when equipped with hyperbolic metrics of constant negative
curvature.  In the genus one case, the evaluation of the determinant of the Laplacian amounts to Kronecker's second limit
formula, resulting in the evaluation of determinant of the Laplacian in terms of Riemann's theta function.  For higher
genus surfaces, the determinant of the Laplacian is expressed in terms of Selberg's zeta function.
Far-reaching applications of determinants of the Laplacian, and various sums known as analytic torsion, provide the underpinnings
of the analytic aspect of Arakelov theory.

\subsection{Our main results}
If we continue to consider the setting of Laplacians on finite volume Riemann surfaces, we note that the zeta-regularized
approach to defining a determinant of the Laplacian does not extend to the setting when the surface admits cusps.  Articles
exist where the authors propose means to define determinants of the Laplacian in the non-compact setting by
regularizing the trace of the heat kernel in some sense; see, for example, \cite{JLund95}, \cite{JLund96}, \cite{JLund97} and \cite{Mull98}.
The purpose of the present article is to provide another approach to the problem.  The point of view we consider can be explained through
the following example.
Let $M$ denote a finite volume hyperbolic Riemann surface, and let $\mathbf \Delta$ denote the associated Laplacian which acts
on the space of smooth functions on $M$.  If $M$ is compact, the spectrum of $\mathbf \Delta$ consists of discrete eigenvalues.
If $M$ is not compact, then the corresponding spectrum has discrete eigenvalues, whose nature is the subject of the Phillips-Sarnak
philosophy \cite{PS92}, as well as a continuous spectrum with associated measure $d\mu$ which is absolutely continuous with respect to Lebesgue measure
on $\RR$.  Furthermore, one can write $d\mu (r) = \phi'/\phi (1/2+ir) dr$ where $dr$ denotes Lebesgue measure and $\phi(s)$ is a meromorphic function
of $s \in \CC$ but whose restriction to the line $\Re(s) =1/2$ appears in the evaluation of the spectral measure.  The set $\Lambda$ to be
constructed consists of the poles of the function $\phi(s)$ for all $s \in \CC$ together with additional points in $\CC$ associated
to the $L^{2}$ spectrum, appropriately computed in the parameter $r$.

Once the set $\Lambda$ has been determined, then the regularized product we define follows the superzeta function construction
which was systematically developed by Voros; see \cite{Voros1}, \cite{Voros2} and \cite{Voros3}.  In vague terms, which will be clarified
below, one defines $\displaystyle \Z_{f}(s,z)=\sum_{\lambda_{k}\in\Lambda }(z-\lambda_{k})^{-s}$ for $s \in \CC$ with $\text{\rm Re}(s)$ sufficiently large
and $z\in \CC$ suitably restricted.  The superzeta regularized product of $z-\Lambda$ is obtained by proving the meromorphic continuation
in $s$ and employing a special value at $s=0$.

The purpose of this paper is to establish general circumstances under which the proposed construction is valid.  In section 2 we prove that for
a general entire function $f$ of finite order, one can define the superzeta regularized product of its zeros.  Furthermore, we establish a
precise relation between the Hadamard product representation of $f$ and $\exp(-\Z_{f}'(0,z))$ where the prime denotes a derivative in $s$.  In
section 3 we present an example of the results from section 2 taking $f$ to be the Selberg zeta function associated to any co-finite Kleinian
group. In section 4 we generalize results of section 2, by assuming that $f$ is a finite order meromorphic function which admits a general Dirichlet series representation.  We then prove, in Theorem 4.1, that one has the meromorphic continuation of the corresponding superzeta function.  Section 5 yields an example of the setting of section 4
obtained by considering the Selberg zeta function associated to any finite volume hyperbolic manifold of arbitrary dimension.

To summarize, the main results of this article combine to prove the analytic continuation of the superzeta regularized product for any meromorphic function which admits a general Dirichlet series representation.  The result is more general than the so-called ``ladder theorem'' from
\cite{JL93a} which requires a functional equation.

Finally, let us note that circumstances may occur when a sequence $\Lambda$ may arise from an operator.  In \cite{FJS16}
we took $\Lambda$ to be the divisor of the Selberg zeta function for a finite volume, non-compact hyperbolic Riemann
surface.  In this case, it was discussed in \cite{FJS16} how $\Lambda$ can be viewed in terms of the Lax-Phillips
scattering operator in \cite{Lax-Phill76}, so then we obtain a construction of what could be viewed as the determinant of this operator.

\section{Zeta regularization of entire functions}

Let $\RR^{-} = (-\infty,0]$ be the non-positive real numbers. Let $\{y_{k}\}_{k\in \mathbb{N}}$ be the sequence of zeros
of an entire function $f$ of order $\kappa\geq1$, repeated with their multiplicities. Let
$$
X_f = \{z \in \CC~|~ (z-y_{k}) \notin \RR^{-}~\text{for all} ~ y_{k} \}.
$$
For $z \in X_f,$ and $s \in \CC$ (where convergent) consider the series
\begin{equation}
\Z_{f}(s,z)=\sum_{k=1}^{\infty }(z-y_{k})^{-s},  
\end{equation}
where the complex exponent is defined using the principal branch of the logarithm with $\arg z\in
\left( -\pi ,\pi \right) $ in the cut plane $\CC \setminus \RR^{-}. $

Since $f$ is of order $\kappa$, $\Z_{f}(s,z)$ converges absolutely for $\Re(s) > \kappa.$
The series $\Z_{f}(s,z)$ is called the zeta function associated to the zeros of $f $, or simply the \emph{superzeta} function of $f.$

If $\Z_{f}(s,z)$ has a meromorphic continuation which is regular at $s=0,$ we define, for $z\in X_f$ the \emph{zeta regularized product} associated to $f$ as
$$ D_{f}\left( z \right) = \exp\left( {-\frac{d}{ds}\left. \Z_{f}\left( s,z\right) \right|_{s=0}  } \right).$$

Let $m= \lfloor \kappa \rfloor$, where $\lfloor \kappa \rfloor$ denotes the largest integer less than or equal to $\kappa$. Hadamard's product formula allows us to write
\beq
f(z) = \Delta_{f}(z) = e^{g(z)} z^r \prod_{k=1}^\infty \left( \left(1-\frac{z}{y_k}     \right)\exp\left[ \frac{z}{y_k} +...+ \frac{z^m}{m{y_k}^m}   \right]    \right),
\eeq
where $g(z)$ is a polynomial of degree $m$ or less, $r\geq 0$ is the order of the eventual zero of $f$ at $z=0,$ and the other zeros $y_k$ are listed with multiplicity.
A simple calculation shows that when $z \in X_f,$
\beq \label{eqTripDer}
\Z_{f}(m+1,z)=\frac{(-1)^m}{m!}\left(\log \Delta _{f}\left( z\right) \right)
^{(m+1) }.
\eeq

The following proposition is due to Voros (\cite{Voros1}, \cite{Voros3}, \cite{VorosKnjiga}). We also give a different proof.

\begin{prop} \label{prop: Voros cont.}
Let $f$ be an entire function of order $\kappa \geq 1$, and for $k\in\NN,$ let $y_k$ be the sequence of zeros of $f.$ Let $\Delta_{f}(z)$
denote the Hadamard product representation of $f.$ Assume that for $n>m=\lfloor \kappa \rfloor$ we have the following asymptotic expansion:
\begin{equation} \label{defAE}
\log \Delta_{f}(z)= \sum_{j=0}^{m}\widetilde{a}_{j}z^{j}(\log z-H_j)+ \sum_{j=0}^{m} b_{j}z^{j}+\sum_{k=1}^{n-1}a_{k}z^{\mu _{k}} + h_n(z),
\end{equation}
where $H_0=0$, $H_j=\sum_{l=1}^j (1/l)$, for $j\geq 1$, $1>\mu _{1}>...>\mu _{n} \rightarrow -\infty $, and $h_n(z)$ is a sequence of holomorphic functions in the sector $\left\vert \arg z\right\vert <\theta <\pi, \quad (\theta >0)$ such that $h_n^{(j)}(z)=O(|z|^{\mu_n-j})$, as $\left\vert z\right\vert \rightarrow \infty $ in the above sector, for all integers $j \geq 0.$

Then, for all $z\in X_f,$ the superzeta function $\Z_{f}(s,z)$  has a meromorphic continuation to the half-plane $\Re(s)<\kappa$ which is regular at $s=0.$
Furthermore, the zeta regularized product $D_{f}\left( z\right) $ associated to $\Z_{f}(s,z)$ is related to $\Delta_{f}(z)$ through the formula
\begin{equation}
D_{f}(z)=e^{-(\sum_{j=0}^{m} b_{j}z^{j})}\Delta_{f}(z)  \label{D(z)}
\end{equation}
which also provides analytic continuation of $D_f(z)$ from $X_f$ to the whole complex $z-$plane.
\end{prop}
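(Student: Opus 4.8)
The strategy is to exploit the exact identity \eqref{eqTripDer}, which already furnishes $\Z_f(s,z)$ at $s=m+1$, and then to ``integrate down'' in $s$ by means of a Beta--integral representation. A direct manipulation with the principal branch shows that, for $z\in X_f$ and $\kappa<\Re(s)<m+1$,
\[
\Z_f(s,z)=\frac{\Gamma(m+1)}{\Gamma(m+1-s)\,\Gamma(s)}\int_0^\infty w^{\,m-s}\,\Z_f(m+1,z+w)\,dw .
\]
Indeed, writing $\Z_f(m+1,z+w)=\sum_k(z+w-y_k)^{-(m+1)}$ and substituting $w=(z-y_k)t$ in the $k$-th summand is legitimate because $z-y_k\notin\RR^-$ keeps $z+w-y_k$ off the cut, so the ray of integration may be rotated without obstruction; each resulting integral is evaluated in terms of a Beta function, and the interchange of $\sum_k$ with $\int_0^\infty$ is justified by absolute convergence for $\Re(s)>\kappa$ together with the order hypothesis on $\{y_k\}$. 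Note that $\kappa<m+1$ always holds, so this strip is nonempty.

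To continue in $s$, differentiate the asymptotic expansion \eqref{defAE} exactly $m+1$ times: the polynomial $\sum_j b_jz^j$ and the constants $\widetilde a_jH_jz^j$ are annihilated, and with \eqref{eqTripDer} one obtains, as $|z|\to\infty$ in the sector,
\[
\Z_f(m+1,z)=\sum_{\ell}\alpha_\ell\,z^{-\sigma_\ell}+\frac{(-1)^m}{m!}\,h_n^{(m+1)}(z),\qquad h_n^{(m+1)}(z)=O\!\left(|z|^{\mu_n-m-1}\right),
\]
where the exponents $\sigma_\ell$ range over $\{1,\dots,m+1\}$ (contributed by the $\widetilde a_j$-terms) and over $\{m+1-\mu_k\}_{k\le n-1}\subset(m,\infty)$ (contributed by the $a_k$-terms). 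Splitting $\int_0^\infty=\int_0^1+\int_1^\infty$ in the representation above, the $\int_0^1$ part is holomorphic in $s$ on $\Re(s)<m+1$; in the $\int_1^\infty$ part insert the displayed expansion, evaluate each $\int_1^\infty w^{m-s}(z+w)^{-\sigma_\ell}\,dw$ as an explicit Beta--function integral minus a harmless $\int_0^1$ --- this is meromorphic in $s$ with at worst simple poles at $s=m+1-\sigma_\ell-j$, $j\ge0$ --- and observe that the remainder integral is holomorphic on $\Re(s)>m+1-\mu_n$. Since \eqref{defAE} is available for every $n$ and $\mu_n\to-\infty$, letting $n\to\infty$ yields the meromorphic continuation to the whole half-plane $\Re(s)<\kappa$ (indeed to all of $\CC$). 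Regularity at $s=0$ is then automatic: a pole there could only arise through an exponent $\sigma_\ell\in\{1,\dots,m+1\}$, i.e. through the factor $\Gamma(s+\sigma_\ell-m-1)=\Gamma(s-j)$ which has a simple pole there, and this is cancelled precisely by the $1/\Gamma(s)$ in the prefactor. The chief analytic obstacle is the uniformity of the error term in the expansion of $\Z_f(m+1,z+w)$ as $w\to+\infty$ for fixed $z$, which is what legitimizes integrating the expansion term by term over $[1,\infty)$; this follows because $\arg(z+w)\to0$, so $z+w$ eventually enters the sector in which \eqref{defAE} is valid, after peeling off a bounded sub-interval of $w$ on which the integrand is harmless.

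It remains to evaluate $D_f(z)=\exp(-\partial_s\Z_f(s,z)|_{s=0})$ from this representation, after regrouping each summand so that it is holomorphic at $s=0$. Because $1/\Gamma(s)=s+O(s^2)$, any summand of the shape $\Phi(s)/\Gamma(s)$ with $\Phi$ holomorphic at $0$ contributes exactly $\Phi(0)$ to $\partial_s\Z_f|_{s=0}$; this disposes of the $a_k$-terms and of the $\int_0^1$ and remainder pieces. For a $\widetilde a_j$-term one first rewrites $\Gamma(s-j)/\Gamma(s)=1/[(s-1)\cdots(s-j)]$ and differentiates the resulting holomorphic expression directly. A short computation then gives: the term with $\sigma_\ell=m+1-j$ contributes $-\widetilde a_jz^j(\log z-H_j)$; the term with $\sigma_\ell=m+1-\mu_k$ contributes $-a_kz^{\mu_k}$ (after simplifying a quotient of Gamma--values by the reflection formula); and the $\int_0^1$ and remainder pieces together reduce, after $m+1$ integrations by parts, to $\tfrac{(-1)^m}{m!}\int_0^\infty w^m h_n^{(m+1)}(z+w)\,dw=-h_n(z)$. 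Summing,
\[
\partial_s\Z_f(s,z)\big|_{s=0}=-\sum_{j=0}^{m}\widetilde a_jz^j(\log z-H_j)-\sum_{k=1}^{n-1}a_kz^{\mu_k}-h_n(z)=-\log\Delta_f(z)+\sum_{j=0}^{m}b_jz^j ,
\]
the last equality being a restatement of \eqref{defAE}. Exponentiating gives \eqref{D(z)}; and since $\Delta_f$ is entire, the right-hand side of \eqref{D(z)} is entire, so it provides the asserted analytic continuation of $D_f$ from $X_f$ to the whole $z$-plane. (The same computation incidentally yields $\Z_f(0,z)=\sum_{j=0}^{m}\widetilde a_jz^j$.)
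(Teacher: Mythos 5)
Your argument is correct and follows essentially the same route as the paper's proof: the Beta-integral representation of $\Z_{f}(s,z)$ in terms of $\Z_{f}(m+1,z+y)$, insertion of the $(m+1)$-fold differentiated expansion \eqref{defAE} with term-by-term Beta evaluation and a remainder whose domain of holomorphy in $s$ grows as $\mu_n\to-\infty$, cancellation of the would-be pole at $s=0$ by the zero of $1/\Gamma(s)$, and the identical term-by-term computation of $\left.\frac{d}{ds}\Z_{f}(s,z)\right|_{s=0}$ (with the final identity extended off the sector by uniqueness of analytic continuation). One small slip: since $h_n^{(m+1)}(z+w)=O\left(w^{\mu_n-m-1}\right)$, the remainder integral $\int_1^\infty w^{m-s}h_n^{(m+1)}(z+w)\,dw$ is holomorphic for $\Re(s)>\mu_n$, not $\Re(s)>m+1-\mu_n$; it is this correct exponent (a half-plane that expands as $n\to\infty$) which makes your limiting step deliver the continuation to the left.
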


\begin{proof}

For any $z\in X_f$, the series
\beq
\Z_{f}(m+1, z+y)= \sum_{k=1}^{\infty }(z+y-y_{k})^{-(m+1)} \label{zeta2}
\eeq
converges absolutely and uniformly for  $y \in (0,\infty).$  Furthermore, application of \cite[Formula 3.194.3]{GR07}, with $\mu=m+1-s$, $\nu = m+1$ and $\beta=(z-y_k)^{-1}$ yields, for all $y_k,$
$$
\int\limits_{0}^{\infty}\frac{y^{m-s}\,dy }{(z+y-y_k)^{m+1} }=\frac{1}{m!}(z-y_k)^{-s}\Gamma(m+1-s)\Gamma(s).
$$
Absolute and uniform convergence of the series \eqref{zeta2} for $\Re(s)>\kappa$ implies that
\begin{align}\label{Zintegralnareprezentacija}
\Z_{f}(s,z)&=\frac{
m!}{\Gamma(m+1-s)\Gamma(s)}\int_{0}^{\infty }\Z_{f}(m+1,z+y)y^{m-s}dy \\ &= \frac{(-1)^m}{\Gamma(m+1-s)\Gamma(s)}\int_{0}^{\infty } \left(\log \Delta _{f}\left( z+y\right) \right)
^{(m+1) } y^{m-s}dy \notag,
\end{align}
for $\kappa< \Re(s) <m+1.$

Next, we use \eqref{Zintegralnareprezentacija} together with \eqref{defAE} in order to get the meromorphic continuation of $Z_{f}(s,z)$ to the half plane Re$(s)<m+1$.
We start with \eqref{eqTripDer} and differentiate Equation~(\ref{defAE}) $(m+1)$ times to get
\begin{align*}
\left(\log \Delta_f(z+y)\right)
^{(m+1) }& =  \sum_{j=0}^{m}   \frac{(-1)^{m-j} j!(m-j)! \widetilde{a}_j}{(z+y)^{(m+1-j)}}+\sum_{k=1}^{n-1}\frac{a_{k}\mu_k(\mu_k-1)\cdot ... \cdot(\mu_k-m)}{(z+y)^{m+1-\mu _{k}}}\\ &+ h_n^{(m+1)}(z+y),
\end{align*}
for any $n>m$.

Since $\mu_k \searrow -\infty$, for an arbitrary $\mu<0$ there exists $k_0$ such that $\mu_k \leq \mu$ for all $k\geq k_0$, hence we may write
\begin{align*}
\left(\log \Delta _{f}\left( z+y\right) \right)
^{(m+1) } y^{m+1} &= y^{m+1}\left(\sum_{j=0}^{m}   \frac{(-1)^{m-j} j!(m-j)! \widetilde{a}_j}{(z+y)^{m+1-j}} \right.\\ &+ \left.\sum_{k=1}^{n-1}\frac{a_{k}\mu_k(\mu_k-1)\cdot ... \cdot(\mu_k-m)}{(z+y)^{m+1-\mu _{k}}}\right) + g_{\mu}(z+y),
\end{align*}
where $g_{\mu}(z+y)=y^{m+1}h_{k_0}^{(m+1)}(z+y).$

Note that
\beq g_{\mu}(z+y) = O(y^{\mu}) \quad \text{as $y \to \infty,$} \quad \text{and} \quad g_{\mu}(z+y) = O(y^{m+1}) \text{ as $y \searrow 0$}. \label{eqDecayG}
\eeq
Application of \cite[Formula 3.194.3]{GR07} yields
\begin{multline}\label{z int for continuation}
\frac{(-1)^m}{\Gamma(m+1-s)\Gamma(s)}\int_{0}^{\infty }\left(\log \Delta _{f}\left( z+y\right) \right)
^{(m+1) }y^{m-s}dy= \sum_{j=0}^{m} (-1)^j j!\widetilde{a}_j \frac{\Gamma(s-j)}{\Gamma(s)} z^{j-s}\\
- \sum_{k=1}^{k_0-1}a_{k}\frac{\Gamma(s-\mu_k)}{\Gamma(s) \Gamma(-\mu_k)} z^{\mu_k-s}
 +\frac{(-1)^m}{\Gamma(m+1-s)\Gamma(s)} \int_{0}^{\infty }g_{\mu}(z+y)y^{-s-1}dy.
\end{multline}
The integral on the right hand side of \eqref{z int for continuation} is the Mellin transform of the function $g_{\mu}.$ By \eqref{eqDecayG} this integral represents a holomorphic function in $s$ for all $s$ in the half strip $\mu < \mathrm{Re}(s)<m+1$. The other terms on the right hand side of \eqref{z int for continuation} are meromorphic in $s$, hence, by \eqref{Zintegralnareprezentacija}, the right-hand side of \eqref{z int for continuation} provides meromorphic continuation of integral $\int_{0}^{\infty }Z_{f}(m+1,z+y)y^{m-s}dy$ from the strip $\kappa<\mathrm{Re}(s)<m+1$ to the strip $\mu < \mathrm{Re}(s)<m+1$. Since $\mu<0$ was chosen arbitrarily, we can let $\mu \to -\infty$ and obtain the meromorphic continuation of this integral to the half plane Re$(s)<m+1.$

Formula \eqref{z int for continuation}, together with \eqref{Zintegralnareprezentacija}, now yields the following representation of $\Z_f(s,z)$, for an arbitrary, fixed $z\in X_f,$ valid in the half plane $\mathrm{Re}(s)<m+1$:
\begin{multline} \label{Zf repres}
\Z_f(s,z)= \sum_{j=0}^{m} (-1)^j j!\widetilde{a}_j \frac{\Gamma(s-j)}{\Gamma(s)} z^{j-s} - \sum_{k=1}^{k_0-1}a_{k}\frac{\Gamma(s-\mu_k)}{\Gamma(s) \Gamma(-\mu_k)} z^{\mu_k-s}\\
 +\frac{(-1)^m}{\Gamma(m+1-s)\Gamma(s)} \int_{0}^{\infty }h_{k_0}^{(m+1)}(z+y)y^{m-s}dy.
\end{multline}
From the decay properties of $h_{k_0}^{(m+1)}(z+y),$ it follows that $\Z_f(s,z)$ is holomorphic at $s=0.$  Furthermore since  $\tfrac{1}{\Gamma(s)}$ has a zero at $s=0,$  the derivative of the last term in \eqref{Zf repres} is equal to
\begin{align*}
\left( \left. \frac{d}{ds}\frac{1}{\Gamma(s)}\right|_{s=0}\right) \frac{(-1)^m}{\Gamma(m+1)}\int_{0}^{\infty }h_{k_0}^{(m+1)}(z+y)y^{m}dy &=  \frac{(-1)^m}{m!}\int_{0}^{\infty }h_{k_0}^{(m+1)}(z+y)y^{m}dy\\& =- h_{k_0}(z),
\end{align*}
where the last equality is obtained from integration by parts $m$ times, and  using the decay of $h_{k_0}(z+y)$ and its derivatives as $y\to+\infty$, for $\mu_{k_0}<0.$ Moreover, since
$$
\left.\frac{d}{ds}\frac{\Gamma(s-\mu_k)}{\Gamma(s)} \right|_{s=0}=  \Gamma(-\mu_k)\cdot \left. \frac{d}{ds}\frac{1}{\Gamma(s)}\right|_{s=0} =\Gamma(-\mu_k),
$$
elementary computations yield that
$$
\left.\frac{d}{ds}\left(\sum_{k=1}^{k_0-1}a_{k}\frac{\Gamma(s-\mu_k)}{\Gamma(s) \Gamma(-\mu_k)} z^{\mu_k-s} \right) \right|_{s=0} = \sum_{k=1}^{k_0-1}a_{k}z^{\mu_k}.
$$
This shows that
\beq \label{deriv}
-\left. \frac{d}{ds} \Z_f(s,z) \right|_{s=0}=-\left. \frac{d}{ds} \left(\sum_{j=0}^{m} (-1)^j j!\widetilde{a}_j \frac{\Gamma(s-j)}{\Gamma(s)} z^{j-s} \right) \right|_{s=0}+\sum_{k=1}^{k_0-1}a_{k}z^{\mu _{k}} + h_{k_0}(z),
\eeq
for $z$ in the sector $\left\vert \arg z\right\vert <\theta <\pi $, $(\theta >0)$.

Now, for $j\in\{1,...,m\}$ one has
$$
\left. \frac{d}{ds} \left( \frac{\Gamma(s-j)}{\Gamma(s)} z^{j-s} \right) \right|_{s=0} = (-\log z) z^j \prod_{k=1}^j (0-k)^{-1} +  z^j \left. \frac{d}{ds}\frac{\Gamma(s-j)}{\Gamma(s)}  \right|_{s=0}.
$$
A straightforward computation shows that
$$
\left. \frac{d}{ds}\frac{\Gamma(s-j)}{\Gamma(s)}  \right|_{s=0} = \left. \frac{d}{ds}\prod_{k=1}^j (s-k)^{-1} \right|_{s=0}= \frac{(-1)^j}{j!}\sum_{k=1}^{j}\frac{1}{k}= H_j \cdot \frac{(-1)^j}{j!}.
$$
Therefore,
$$
\left. \frac{d}{ds} \left((-1)^j j!\widetilde{a}_j \frac{\Gamma(s-j)}{\Gamma(s)} z^{j-s} \right) \right|_{s=0}= \widetilde{a}_j (-\log z) z^j + \widetilde{a}_j z^j H_j,
 $$
for $j\in\{1,...,m\}$ and hence
$$
-\left. \frac{d}{ds} \left(\sum_{j=0}^{m} (-1)^j j!\widetilde{a}_j \frac{\Gamma(s-j)}{\Gamma(s)} z^{j-s} \right) \right|_{s=0}= \sum_{j=0}^{m}\widetilde{a}_j z^j (\log z - H_j).
$$

Now, \eqref{D(z)} follows from equation \eqref{deriv} and uniqueness of analytic continuation.
\end{proof}

\section{Superzeta functions constructed from confinite Kleinian groups}

Let  $\Gamma $ be a cofinite  Kleinian group.  Suppose $T\in\Gamma$ is loxodromic. Then $T$ is conjugate in $\pc$ to a unique element of the form $\left(\smallmatrix a(T)& 0\\ 0& a(T)^{-1} \endsmallmatrix\right)$
such that $a(T)\in\CC$ has $|a(T)|>1$.  Let $N(T) \df |a(T)|^{2},$ and  let  $\mC(T) $ denote  the centralizer of $T$ in $\Gamma.$  There exists a (primitive)  loxodromic element $T_0,$ and a finite cyclic elliptic subgroup  $\en$ of order $m(T), $ generated by an element $E_T $   such that
$\mC(T) = \langle T_{0} \rangle \times \en, $ where $\langle  T_{0} \rangle = \{\, T_{0}^{n} ~ | ~ n \in\ZZ ~ \}. $


The elliptic element $ E_T$ is conjugate in $\pc$ to an element of the form  $\left(\smallmatrix \zeta(T_0)& 0\\ 0& \zeta(T_0)^{-1} \endsmallmatrix\right)$
where here $\zeta(T_0)$ is a primitive $2m(T)$-th root of unity.

\bd \label{defSZ}
For $\R(s)>1 $ the Selberg zeta-function $Z(s)$ is defined by
$$
Z(s) \df \prod_{ \{T_0 \} \in \mathcal{R}} ~ \prod_{  \substack{ l,k \geq 0 \\  c(T,l,k)=1   } } \left( 1-a(T_0)^{-2k} \overline{ a(T_0) ^{-2l}} N(T_0)^{-s - 1}    \right).
$$
Here the product with respect to $T_0$ extends over a maximal reduced system $\mathcal{R} $ of $\Gamma$-conjugacy classes of primitive loxodromic elements of $\Gamma.$ The system  $\mathcal{R} $ is called reduced if no two of its elements have representatives with the same centralizer\footnote{See  \cite{Elstrodt} section 5.4 for more details}.  The function  $c(T,l,k)$ is defined by
$c(T,l,k)= \zeta(T_0)^{2l}  \zeta(T_0)^{-2k}.$
\ed


Let $\zeta \in \PP^1$ be cusp of $\Gamma,$
$
\Gamma_{\zeta} = \{~ \gamma \in \Gamma ~ | ~ \gamma \zeta = \zeta ~\},
$
and  let $ \Gamma_\zeta^\prime $ be the maximal torsion-free parabolic subgroup of $\Gamma_\zeta$.
The possible values for the index of $[\Gamma_\zeta:\Gamma_\zeta^\prime]$ are 1,2,3,4, and 6.

\begin{prop}[\cite{Frid05}]
Let  $\Gamma$ be cofinite with one class of cusp at $\infty$. If $[\gi:\gip] = 1$ or $[\gi:\gip] = 2,$  then  $Z(s)$ is a meromorphic function. Furthermore, the zeros of the Selberg zeta-function are:
\begin{enumerate}
\item zeros at the points $\pm s_j$ on the line $\R(s)=0$ and on the interval $[-1,1].$  Each point $s_j$ is related to an eigenvalue $ \lambda_j $ of the discrete spectrum of (the self-adjoint extension) of Laplacian $-\lp $ by $1-s_j^2 = \lambda_j.$  The multiplicity of each $s_j > 0,$ $m(s_j),$ is equal to the multiplicity of the corresponding eigenvalue $m(\lambda_j).$  If $-s_j < 0$ happens to also  be a zero of $\phi(s)$ of multiplicity $q(-s_j)$ then $m(-s_j) = m(\lambda_j) - q(-s_j).$ Here $\phi(s)$ is determinant of the automorphic scattering matrix (in this case, it is a scalar since we assume the number of cusps is one).

\item zeros at the points $\rho_j, $ that are poles of $\phi(s),$ which lie in the half-plane $\R(s) < 0. $ The multiplicity of each $\rho_j $ is equal its multiplicity, as a pole, of $\phi(s).$

\item if $[\gi:\gip] = 1$ then zeros at the points $s=\ZZ_{<0}$ with multiplicity 1; at the point $s=0$ it could be a pole or a zero, with multiplicity $\frac{1}{2}(\phi(0) - 1) + 2m(1),$ where $m(1)$ denotes the multiplicity of the possible eigenvalue $\lambda = 1.$

\item if $[\gi:\gip] = 2$ then zeros at the points $s=-1,-3,-5,\dots,$ with multiplicity 1; at the point $s=0$ it could be a pole or a zero with multiplicity $\frac{1}{2}(\phi(0) - 1) + 2m(1).$
\end{enumerate}
\end{prop}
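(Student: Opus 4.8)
The plan is to deduce the proposition from the Selberg trace formula for the cofinite Kleinian group $\Gamma$ acting on $\HH^3$, specialized to the case of one class of cusp at $\infty$, and to read off from its explicit form both the meromorphic continuation of $Z(s)$ and the precise location of its zeros.

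First I would verify that the product of Definition~\ref{defSZ} converges absolutely and locally uniformly for $\R(s)>1$ by comparison with the prime geodesic estimate $\#\{\{T_0\}\in\mathcal R:N(T_0)\le x\}=O(x^2)$, so that $Z(s)$ is holomorphic and zero-free there and, after absorbing the elementary geometric-series factors coming from the sum over $k,l$ and the cyclic order $m(T)$, $Z'/Z(s)$ is exactly the Dirichlet series $\sum_{\{T_0\},\,n\ge1}\frac{\log N(T_0)}{|a(T_0)^n-a(T_0)^{-n}|^{2}}N(T_0)^{-ns}$ that appears on the loxodromic side of the trace formula. I would then apply the trace formula with the point--pair invariant equal to a regularized difference of resolvent kernels on $\HH^3$, whose radial transform, in the normalization $\lambda=1-s^{2}$, is $h_{s,w}(r)=(r^{2}+s^{2})^{-1}-(r^{2}+w^{2})^{-1}$ for an auxiliary parameter $w$ with $\R(w)$ large. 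On the geometric side: the loxodromic term then produces a constant multiple of $\tfrac1s\tfrac{Z'}{Z}(s)$ minus the same expression at $w$; the identity term, because the Plancherel density of $\HH^3$ is a polynomial in $r$ with no gamma factors, integrates to a polynomial in $s$ and so contributes only an $\exp(\text{polynomial})$ factor to $Z(s)$ and hence no zeros; the finitely many elliptic classes contribute rational functions of $s$ whose poles are determined by the roots of unity $\zeta(T_0)$; and the parabolic contribution, together with the continuous--spectrum integral involving $\phi'/\phi$ on the critical line, contributes the digamma and gamma factors attached to the cusp and the logarithmic derivative of the scattering determinant $\phi$. On the spectral side, $\sum_j h_{s,w}(r_j)$ is meromorphic in $s$ with simple poles at $s=\pm s_j$ whose residues are governed by $m(\lambda_j)$.

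Solving the identity for $Z'/Z(s)$ gives its meromorphic continuation to all of $\CC$; integrating and exponentiating, with the constant of integration fixed on $\R(s)>1$, yields the meromorphic continuation of $Z(s)$ together with a Hadamard-type factorization, and the divisor is then identified by a term-by-term residue computation. The discrete spectrum produces the zeros of part~(1) at $\pm s_j$ on the line $\R(s)=0$ and in $[-1,1]$ with multiplicity $m(\lambda_j)$; when $-s_j$ lies in $\R(s)<0$ and is simultaneously a zero of $\phi$ of order $q(-s_j)$, the $\phi'/\phi$ term supplies there a pole of residue $-q(-s_j)$, accounting for the correction $m(-s_j)=m(\lambda_j)-q(-s_j)$. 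The poles of $\phi(s)$ in $\R(s)<0$ give the resonance zeros of part~(2) with matching multiplicities. The gamma factor carried by the zeroth Fourier coefficient of the Eisenstein series, which enters both $\phi(s)$ and the explicit parabolic terms, produces trivial zeros of multiplicity one: when $[\gi:\gip]=1$ this factor is of $\Gamma(s)$--type, giving zeros at every negative integer (part~(3)), whereas when $[\gi:\gip]=2$ the extra stabilizer of order two halves the relevant argument, leaving a $\Gamma(\tfrac{s+1}{2})$--type factor and hence zeros only at the odd negative integers (part~(4)). The point $s=0$ has to be treated separately: there $h_{s,w}$ has a pole, $s=0$ is the common value of $+s_j$ and $-s_j$ for the possible eigenvalue $\lambda=1$ (contributing $2m(1)$), and the functional equation $\phi(s)\phi(-s)=1$ forces $\phi(0)=\pm1$, so the surviving order is $\tfrac12(\phi(0)-1)+2m(1)$, which is negative --- a pole --- exactly when $\phi(0)=-1$ and $m(1)=0$.

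The step I expect to be the real obstacle is precisely this cusp bookkeeping: isolating the explicit parabolic terms in the Kleinian trace formula, tracking carefully how the index $[\gi:\gip]\in\{1,2\}$ feeds into the zeroth Fourier coefficient of the Eisenstein series and hence into the gamma factor of $\phi(s)$, and matching the apparent poles of the resolvent transform at $s=0$ and $s=1$ against the claimed divisor so that no spurious zeros or poles survive. Once the trace formula with the resolvent test function is in place, the convergence of the product, the meromorphic continuation, and the identification of the spectral zeros in parts~(1) and~(2) are routine.
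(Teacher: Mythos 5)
The paper itself does not prove this proposition: it is imported verbatim from \cite{Frid05}, so the only meaningful comparison is with the argument of that reference. Your outline --- convergence of the product for $\R(s)>1$ from the $O(x^{2})$ count of loxodromic classes, the Selberg trace formula on $\HH^3$ with the resolvent-difference test function $h_{s,w}(r)=(r^{2}+s^{2})^{-1}-(r^{2}+w^{2})^{-1}$, solving for $Z'/Z$, integrating and exponentiating, and reading the divisor off the poles of the individual terms --- is exactly the strategy carried out there (and in Elstrodt--Grunewald--Mennicke in the cocompact case), and your identification of the spectral zeros, the resonance zeros at poles of $\phi$, and the bookkeeping at $s=0$ via $\phi(0)=\pm1$ and $2m(1)$ is in the right spirit.

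Two places in your sketch are not right as stated, and they are precisely where the work lies. First, the trivial zeros in parts (3) and (4) do not come from ``the gamma factor carried by the zeroth Fourier coefficient of the Eisenstein series'': in $\HH^3$ that coefficient is $y^{1+s}+\phi(s)y^{1-s}$ with no gamma factor, and $\phi$ is a genuine Dirichlet series (this is exactly how it is used in Section 3 of the paper). The trivial zeros arise instead from the parabolic terms and, when $[\gi:\gip]=2$, from the cuspidal elliptic terms of the trace formula, whose transforms under $h_{s,w}$ produce digamma-type expressions whose integration yields the $\Gamma(s)$-type, respectively $\Gamma\!\left(\frac{s-1}{2}\right)$-type, factors reflected in the definitions $Z_1^{+}(s)=Z(s)/\Gamma(s)$ and $Z_2^{+}(s)=sZ(s)/\Gamma\!\left(\frac{s-1}{2}\right)$; your plan never isolates these cusp terms, and the distinction between index $1$ and $2$ cannot be recovered from $\phi$ alone. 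Second, the elliptic bookkeeping is off: non-cuspidal elliptic classes (present, e.g., for the Picard group) enter the trace formula only through $g(0)$-type terms, so with the resolvent test function they contribute a constant to $Z'/Z$ --- an exponential factor with no zeros and a pole only at $s=0$ --- not ``rational functions of $s$ whose poles are determined by the roots of unity''; the roots of unity $\zeta(T_0)$ only affect constants, and it is the condition $c(T,l,k)=1$ in Definition \ref{defSZ} that absorbs the torsion in loxodromic centralizers so that the loxodromic side really equals a multiple of $\tfrac1s\,Z'/Z$. Finally, the order at $s=0$ requires assembling the discrete-spectrum contribution $2m(1)$, the continuous-spectrum term involving $\phi'/\phi$ and the value $\phi(0)$, and the $s=0$ singularities of the parabolic, cuspidal-elliptic and elliptic terms; you correctly flag this step as the obstacle, but as written your plan leaves it entirely open, and it is exactly where the formula $\tfrac12(\phi(0)-1)+2m(1)$ comes from in \cite{Frid05}.
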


Let $\left(\smallmatrix *& *\\ c_n & d \endsmallmatrix\right) \in \mathcal{R} $ denote the representatives of $\Gamma_\infty' \setminus ~\Gamma~/~\Gamma_\infty'.$
For $\R(s) > 1,$ the scattering matrix $\phi(s)$ can be written as a Dirichlet series
 $$ \phi(s) = \frac{\pi}{[\gi:\gip]} \frac{m(c_0)}{|\mathcal{P}|^s}|c_0|^{-2-2s}(1+ \sum_{n\geq 1} q_n^{-2-2s}),$$
where $q_n=|c_n/c_0| > 1, $  $|\mathcal{P}|$ is the co-area of the lattice associated to $\Gamma_\infty,$  $c_0$ is the minimal (non-zero) modulus lower left entry for $\Gamma,$ and $m(c_0)$ counts the number of times $c_0$ is present as a representative in $\mathcal{R}.$ See \cite[p. 111 and p.234 Eq. 1.10]{Elstrodt}) for more details.

\subsection{The case $[\gi:\gip] = 1$}
When $[\gi:\gip] = 1,$ we define $Z_1^+(s) = Z(s)/\Gamma(s),$ and let $\Z_1^+(s,z)$ be its superzeta function.  From the asymptotic version of Stirling's formula and from the product expansion of $Z(s),$ it follows that as $\Re(s) \rightarrow +\infty,$ we have that
$$
\log{Z_1^+(s)} \sim \frac{1}{2}\left(\log{s} - 0 \right) - 1 s\left(\log{s} - 1 \right) - \frac{1}{2}\log{2\pi}.
$$
Hence applying Proposition~\ref{prop: Voros cont.} to the finite-order entire function $Z_1^+(s),$ we obtain
the regularized product of the zeros of $Z_1^+(s)$ as $D_{Z_1^+}(s) = \sqrt{2\pi}Z_1^+(s). $

Next let $Z_1^-(s) = \phi(s)Z_1^+(s).$ Hence it follows that as $\Re(s) \rightarrow +\infty,$
$$
\log{Z_1^-(s)} \sim \frac{1}{2}\left(\log{s} - 0 \right) - 1 s\left(\log{s} - 1 \right) - \frac{1}{2}\log{2\pi} + \log\left(\frac{\pi m(c_0)}{|c_0|^2}\right) - s \log(|c_0|^2 |\mathcal{P}|).
$$
Once again, applying Proposition~\ref{prop: Voros cont.} we obtain that
$$
D_{Z_1^-}(s) = \frac{|c_0|^{2s} |\mathcal{P}|^{s}|c_0|^2}{\pi m(c_0)} \sqrt{2\pi} Z_1^-(s).
$$
Recalling that $Z_1^-(s) = \phi(s)Z_1^+(s)$  we obtain a formula for $\phi(s)$ as a quotient of regularized determinants,
$$ \phi(s) = \frac{D_{Z_1^-}(s)}{D_{Z_1^+}(s)}\frac{\pi m(c_0)}{|c_0|^{2s+2} |\mathcal{P}|^{s}}$$
from which one can obtain a formula for the central value of $\phi(s)$; see \cite{FJS16B}.

\subsection{The case $[\gi:\gip] = 2$}
When $[\gi:\gip] = 2,$ (this is the case e.g. for the Picard group) define
$$Z_2^+(s) = \frac{s Z(s)}{\Gamma\left(\frac{s-1}{2}\right)},$$ and let $\Z_2^+(s,z)$ be its superzeta function.

The asymptotic expansion of $\log{\Gamma(\frac{s-1}{2})}$ can be computed as
$$\log{\Gamma(\frac{s-1}{2})} \sim \frac{1}{2}s\log{s} -\frac{s}{2} - \frac{1}{2}s\log{2} - \log{s} + \frac{1}{2}\log{2\pi} + \log{2}.$$
Hence from the product expansion of $Z(s),$ it follows that as $\Re(s) \rightarrow +\infty,$
$$\log{Z_2^+(s)} \sim 2\log{s}  -\frac{1}{2}s\log{s} +\frac{s}{2} + \frac{1}{2}s\log{2}  - \frac{1}{2}\log{2\pi} - \log{2}  $$
and can be rewritten in the form required by Proposition~\ref{prop: Voros cont.}
$$\log{Z_2^+(s)} \sim 2(\log{s} - 0) -\frac{1}{2}s(\log{s} -1) + \frac{\log{2}}{2} s - \frac{1}{2}\log{2\pi} - \log{2}.  $$
Hence
$$D_{Z_2^+}(s) =  \sqrt{\pi}\, 2^{\left( \frac{3-s}{2} \right)}   Z_2^+(s).$$

If we define $Z_2^-(s) = \phi(s) Z_2^+(s)$ then we obtain
$$D_{Z_2^-}(s) =   \frac{|c_0|^{2s} |\mathcal{P}|^{s}|c_0|^2}{\pi m(c_0)} \sqrt{\pi}\, 2^{\left( \frac{3-s}{2} \right)} Z_2^-(s).$$

Finally we obtain
$$ \phi(s) = \frac{D_{Z_2^-}(s)}{D_{Z_2^+}(s)}\frac{\pi m(c_0)}{|c_0|^{2s+2} |\mathcal{P}|^{s}}. $$

\section{Zeta regularization of zeta-type functions through integral representation}

In this section we assume that $f$ is a meromorphic function of finite order $\kappa$ such that $\log f$ possesses a representation as a generalized Dirichlet series
\beq \label{Dirichlet series rep}
\log{f(z)}= \sum_{n=1}^{\infty} \frac{c_n}{q_n^z}
\eeq
converging absolutely and uniformly in any half plane $\Re(s) \geq \sigma + \epsilon$, for any $\epsilon >0$. Here $\{q_n\}$ denotes an increasing sequence of real numbers with $q_1> 1$ and $c_n$ are complex numbers. For such a function $f$ we say it is a \emph{zeta-type function}.

We denote by $N_f$ the set of zeros of $f$ and by $P_f$ the set of poles of $f$ and define
$$X_f = \{z \in \CC~|~ (z-z_{k}) \notin \RR^{-}~\text{for all} ~ z_{k} \in N_f\cup P_f \}. $$
For $z \in X_f,$ and $s \in \CC$ with $\Re(s) >\kappa$ consider the function
\begin{equation}
\Z_{f}(s,z)=\sum_{\rho\in N_f}\ord(\rho) (z-\rho)^{-s} - \sum_{\rho\in P_f}\ord(\rho) (z-\rho)^{-s}=\Z^\text{N}_{f}(s,z) - \Z^\text{P}_{f}(s,z) ,  \label{Zeta1}
\end{equation}
where $\ord(\rho)$ denotes the order of a zero or pole. We call $\Z_{f}(s,z)$ the \emph{superzeta function} associated to the meromorphic function $f$. Note that both series in \eqref{Zeta1} are absolutely convergent for $\Re(s)>\kappa$.

Denote by $W^\mathrm{N}_f$ and $W^\mathrm{P}_f$ Weierstrass products of order $m=\lfloor \kappa \rfloor$ associated to set of zeros and set of poles of $f$, respectively. Then, a straightforward computation shows that for $z\in X_f$
$$
\Z_{f}(m+1,z) = \frac{(-1)^m}{m!}\left( (\log W^\mathrm{N}_f(z))^{(m+1)} - (\log W^\mathrm{P}_f (z))^{(m+1)}\right),
$$
and, moreover
$$
(\log f(z))^{(m+1)} = (\log W^\mathrm{N}_f(z))^{(m+1)} - (\log W^\mathrm{P}_f (z))^{(m+1)}.
$$

We have the following theorem.

\begin{thm} \label{repZ1Thm} Let $f$ be a meromorphic function of zeta-type, of finite order $\kappa$. Fix $z\in X_f. $ The superzeta
function $\Z_{f}(s,z)$ defined by \eqref{Zeta1} has a holomorphic continuation to all $s \in \CC,$ through the equation
\begin{equation} \label{Z1rep}
\Z_{f}(s,z)=\frac{\sin \pi s}{\pi }\int_{0}^{\infty }\left( \frac{f^{\prime }}{f}%
(z+y)\right) y^{-s}dy.
\end{equation}
\end{thm}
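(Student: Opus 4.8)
The plan is to run the Mellin–transform machinery from the proof of Proposition~\ref{prop: Voros cont.}, but to replace the asymptotic expansion \eqref{defAE} by the much stronger information encoded in the Dirichlet series hypothesis \eqref{Dirichlet series rep}: namely that $\log f$ and each of its derivatives decay exponentially along the ray $z+[0,\infty)$. Fix $z\in X_f$ and set $m=\lfloor\kappa\rfloor$. First, exactly as in Proposition~\ref{prop: Voros cont.}, I apply \cite[Formula 3.194.3]{GR07} to each factor $(z+y-\rho)^{-(m+1)}$, $\rho\in N_f\cup P_f$, sum over $\rho$ (the interchange of sum and integral being justified by absolute convergence of \eqref{Zeta1} for $\Re(s)>\kappa$, exactly as in the proof of Proposition~\ref{prop: Voros cont.}), and use the identities recorded before the theorem, in the combined form $\Z_f(m+1,z+y)=\tfrac{(-1)^m}{m!}\bigl(\log f(z+y)\bigr)^{(m+1)}$, to obtain
\begin{equation}\label{eq:star}
\Z_{f}(s,z)=\frac{(-1)^m}{\Gamma(m+1-s)\Gamma(s)}\int_{0}^{\infty}\bigl(\log f(z+y)\bigr)^{(m+1)}\,y^{m-s}\,dy,\qquad \kappa<\Re(s)<m+1.
\end{equation}

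Next I exploit \eqref{Dirichlet series rep}. Termwise differentiation gives, for $\Re(w)\ge\sigma+\eps$, that $(\log f(w))^{(j)}=(-1)^{j}\sum_{n\ge1}c_{n}(\log q_{n})^{j}q_{n}^{-w}$; since $q_n\ge q_1>1$ and $\sum_{n}|c_{n}|q_{n}^{-(\sigma+\eps/2)}<\infty$, this yields $|(\log f(w))^{(j)}|\le C_{j,\eps}\,q_{1}^{-(\Re(w)-\sigma-\eps)}$ for $\Re(w)\ge\sigma+2\eps$, so along $w=z+y$ one has exponential decay in $y$; on the bounded part of the ray $(\log f(z+y))^{(j)}$ is continuous and bounded, because $z\in X_f$ forces $f$ to be holomorphic and non-vanishing on $z+[0,\infty)$. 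Hence the integral in \eqref{eq:star} converges and is holomorphic in $s$ for \emph{every} $s$ with $\Re(s)<m+1$ (the factor $y^{m-s}$ is integrable at $y=0$ there, and the exponential decay dominates any power of $y$ at $y=\infty$); as $1/(\Gamma(m+1-s)\Gamma(s))$ is entire, \eqref{eq:star} furnishes a holomorphic continuation of $\Z_f(s,z)$ to $\Re(s)<m+1$. Together with the defining series on $\Re(s)>\kappa$ (and $\kappa<m+1$, so the two regions overlap and cover $\CC$) and uniqueness of analytic continuation, this shows that, for each fixed $z\in X_f$, the map $s\mapsto\Z_f(s,z)$ is entire.

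I then pass from \eqref{eq:star} to \eqref{Z1rep} by integration by parts; the point to watch is that this is legitimate only \emph{after} the continuation step just carried out, since for $\Re(s)\ge1$ the target integral $\int_0^\infty(f'/f)(z+y)y^{-s}\,dy$ already diverges at $y=0$. So, restricting to $\Re(s)<1$, write $(\log f(z+y))^{(m+1)}=\tfrac{d^{m}}{dy^{m}}\bigl(\tfrac{f'}{f}(z+y)\bigr)$ and integrate by parts $m$ times in \eqref{eq:star}. The boundary contributions at $y=\infty$ vanish by the exponential decay of the derivatives of $\log f$, and those at $y=0$ are constant multiples of $y^{k-s}\big|_{y=0}$ with $k\in\{1,\dots,m\}$, which vanish since $\Re(k-s)\ge\Re(1-s)>0$; what remains is
\[
\int_{0}^{\infty}\bigl(\log f(z+y)\bigr)^{(m+1)}y^{m-s}\,dy=(-1)^{m}\Bigl(\prod_{j=1}^{m}(j-s)\Bigr)\int_{0}^{\infty}\frac{f'}{f}(z+y)\,y^{-s}\,dy=(-1)^{m}\frac{\Gamma(m+1-s)}{\Gamma(1-s)}\int_{0}^{\infty}\frac{f'}{f}(z+y)\,y^{-s}\,dy,
\]
the last integral being absolutely convergent for $\Re(s)<1$. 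Substituting into \eqref{eq:star} and using $\Gamma(s)\Gamma(1-s)=\pi/\sin\pi s$ gives \eqref{Z1rep} for $\Re(s)<1$.

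Finally, for the assertion for all $s\in\CC$: split the integral in \eqref{Z1rep} as $\int_0^1+\int_1^\infty$. The tail $\int_1^\infty$ is entire in $s$ by the exponential decay, while inserting the Taylor expansion of $\tfrac{f'}{f}(z+y)$ about $y=0$ into $\int_0^1$ shows that the head continues meromorphically to $\CC$ with at most simple poles at $s=1,2,3,\dots$. Since $\sin\pi s$ vanishes at every integer, $\tfrac{\sin\pi s}{\pi}\int_0^\infty\tfrac{f'}{f}(z+y)y^{-s}\,dy$ extends to an entire function of $s$; it agrees with $\Z_f(s,z)$ on $\Re(s)<1$ by the previous paragraph, hence, by the continuation of $\Z_f$ to $\Re(s)<m+1$ established above and uniqueness of analytic continuation, it agrees with $\Z_f(s,z)$ everywhere. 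The main obstacle is precisely this region-of-validity bookkeeping: one cannot integrate by parts in the strip where \eqref{eq:star} is first obtained, so the exponential decay coming from \eqref{Dirichlet series rep} has to be used twice --- once to continue $\Z_f$ below that strip, and once to annihilate the boundary terms at infinity in the integration by parts.
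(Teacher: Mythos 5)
Your proposal is correct and follows essentially the same route as the paper's proof: the representation of $\Z_f(s,z)$ via \cite[Formula 3.194.3]{GR07} as the Mellin-type integral of $(\log f(z+y))^{(m+1)}$, the decay of the derivatives of $\log f$ along the ray coming from the Dirichlet series, integration by parts down to $f'/f$ to reach \eqref{Z1rep} for $\Re(s)<1$, and then Taylor expansion of $f'/f(z+y)$ near $y=0$ to show the integral has only simple poles at positive integers, which are cancelled by $\sin\pi s$. Your handling of the region-of-validity bookkeeping (continuing \eqref{eq:star} to $\Re(s)<m+1$ before integrating by parts all at once) is a slightly more explicit version of the paper's strip-by-strip integrations by parts, but it is the same argument in substance.
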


\begin{proof}
For $ z\in X_f $ and  $\kappa < \R(s) < m+1,$ we apply equation~\eqref{eqTripDer} to $\Z_{f}^\text{N}(s,z)$ and $\Z_{f}^\text{P}(s,z)$ and proceed analogously as in the proof of \eqref{Zintegralnareprezentacija} to get
\begin{gather}
\Z_{f}(s,z) = \Z^\text{N}_{f}(s,z) - \Z^\text{P}_{f}(s,z) \notag \\=\frac{
(-1)^m}{\Gamma(m+1-s)\Gamma(s)}\int_{0}^{\infty }\left( (\log W^\mathrm{N}_f(z+y))^{(m+1)} - (\log W^\mathrm{P}_f (z+y))^{(m+1)}\right) y^{m-s}dy   \notag \\
=\frac{
(-1)^m}{\Gamma(m+1-s)\Gamma(s)}\int_{0}^{\infty }\left( \log f\left(
z+y\right) \right) ^{(m+1) }y^{m-s}dy. \label{zeta B plus G_1}
\end{gather}

For a fixed $z\in X_f,$ generalized Dirichlet series representation \eqref{Dirichlet series rep}  yields the asymptotic behavior
\begin{equation*}
\left( \log F\left( z+y\right) \right) ^{(l) }=O\left(\frac{1}{y^n}\right)%
\text{, \ as \ }y\rightarrow \infty, \text{   for all positive integers  }  n
\end{equation*}%
and
\begin{equation*}
\left( \log F\left( z+y\right) \right) ^{ (l) }=O(1)\text{, \
as \ } y  \searrow 0
\end{equation*}%
for any $1\leq l \leq m$.
Therefore, for $m-1< \Re(s) < m$ we may integrate by parts in \eqref{zeta B plus G_1} and obtain
\begin{gather} \label{ZplusGdrugo}
\Z_{f}(s,z) = \frac{(-1)^m}{\Gamma(m-s)(m-s)\Gamma(s)} \int_{0}^{\infty }y^{m-s}d\left( \left(
\log f\left( z+y\right) \right) ^{(m) }\right)= \\ \frac{(-1)^{m-1}}{\Gamma(m-s)\Gamma(s)}\int_{0}^{\infty }\left(
\log f\left( z+y\right) \right) ^{(m) }y^{m-1-s}dy \notag
\end{gather}%
Integrating by parts in \eqref{ZplusGdrugo} for $m-2< \Re(s)<m-1$ (in case when $m\geq 2$) we obtain
$$
\Z_{f}(s,z) = \frac{(-1)^{m-2}}{\Gamma(m-1-s)\Gamma(s)}\int_{0}^{\infty }\left(
\log f\left( z+y\right) \right) ^{(m-1) }y^{m-2-s}dy.
$$
Proceeding inductively in $m$, we deduce that
\beq \label{main fla for zf}
\Z_{f}(s,z) = \frac{1}{\Gamma(1-s)\Gamma(s)}\int_{0}^{\infty }\left(
\log f\left( z+y\right) \right) ' y^{-s}dy = \frac{\sin \pi s}{\pi }\int_{0}^{\infty }\left( \frac{f^{\prime }}{f}%
(z+y)\right) y^{-s}dy,
\eeq
for $0<\Re(s)<1$.

First, we claim that for a fixed $z\in X_f$, the integral
$$
I(s,z)=\int_{0}^{\infty }\left( \frac{f^{\prime }}{f}%
(z+y)\right) y^{-s}dy
$$
which appears on the right hand side of \eqref{main fla for zf} is actually holomorphic function in the half-plane $\Re(s)<1$. To see this, let $\mu \leq 0$ be arbitrary. Decay properties of $(\log f(z+y))'$, as $y\to +\infty$ with $n> -\mu +2$, yield that $(\log f(z+y))' y^{-s} = O(y^{-2})$,  as $y\to +\infty$, for all $s$ such that $\mu <\Re(s)\leq 0$. Moreover, the bound $\frac{f^{\prime }(z+y)}{f(z+y)}%
=O(1)$, for fixed $z\in X_f$ implies that $(\log f(z+y))' y^{-s} = O(1)$,  as $y\to 0$, for all $s$ in the half plane $\Re(s)\leq 0$. This shows that for $z\in X_f$ the integral $I(s,z)$ is absolutely convergent in the strip $\mu <\Re(s)\leq 0$, hence represents a holomorphic function for all $s$ in that strip. Since $\mu \leq 0$ was arbitrarily chosen, we have proved that $I(s,z)$, for $z\in X_f,$ is holomorphic function in the half plane $\Re(s)\leq 0$.

Next, we claim that $I(s,z)$, for $z\in X_f,$ can be meromorphically continued to the half-plane $\Re(s)>0$
with simple poles at the points $s=1,2,...$ and corresponding residues
\begin{equation}
\mathrm{Res}_{s=n}I(s,z)=-\frac{1}{(n-1)!}(\log
f(z))^{(n)}.
\end{equation}
Since the function $\sin(\pi s)$ has simple zeros at points $s=1,2,...$ this would prove that $\Z_{f}(s,z)$, for $z\in X_f$ is actually an entire function of $s$ and the proof would be complete.

Let $\mu>0$ be arbitrary, put $n=\lfloor \mu\rfloor$ to be the integer part of $\mu$ and let $\delta>0$ (depending upon $z\in X_f$ and $\mu$) be such that for $y\in (0,\delta)$ we have the Taylor series expansion
$$
(\log f(z+y))'=\sum_{j=1}^{n}\frac{(\log f(z))^{(j)}}{(j-1)!}y^{j-1} + R_1(z,y),
$$
where $R_1(z,y)=O(y^n)$, as $y\to 0$. Then, for $0<\Re(s)<1$ we may write
$$
I(s,z)= \sum_{j=1}^{n}\frac{(\log f(z))^{(j)}}{(j-1)!}\frac{\delta^{j-s}}{j-s} + \int_{0}^{\delta} R_1(z,y) y^{-s}dy + \int_{\delta}^{\infty }\left( \frac{f^{\prime }}{f}
(z+y)\right) y^{-s}dy.
$$
The bound on $R_1(z,y)$ and the growth of $(\log f(z+y))'$ as $y\to\infty$ imply that the last two integrals are holomorphic functions of $s$ for $\Re(s)\in(0,\mu)$. The first sum is meromorphic in $s$, for $\Re(s)\in(0,\mu)$, with simple poles at $s=j$, $j\in\{1,...,n\}$ and residues equal to $-(\log
f(z))^{(j)}/(j-1)!$. Since $\mu>0$ is arbitrary, this proves the claim and completes the proof.

\end{proof}

\begin{rem}  \label{remark} \rm
We often split a superzeta function $\Z_f(s,z)$ up as
$$
\Z_f(s,z)= \Z^\text{NT}_{f}(s,z) + \Z^\text{T}_{f}(s,z)-\Z^\text{P}_{f}(s,z),
$$
where $\Z^\text{NT}_{f}(s,z)$, $\Z^\text{T}_{f}(s,z)$ and $\Z^\text{P}_{f}(s,z)$ denote the superzeta functions associated to the non-trivial zeros, the trivial zeros, and the poles respectively.  Often $\Z^\text{T}_{f}(s,z)$ and $\Z^\text{P}_{f}(s,z)$ can be expressed in terms of some special functions, namely, the Hurwitz zeta functions, as in \cite{FJS16}. Upon applying Theorem \ref{repZ1Thm} one could deduce a relation giving a meromorphic continuation of the superzeta function associated to the non-trivial zeros of a zeta-type function. Moreover, by differentiating the equation \eqref{Z1rep} with respect to the $s-$variable one may directly compute the regularized determinant $D_f(z)$ associated to the superzeta function constructed over the set of non-trivial zeros.

The advantage of the full asymptotic expansion \eqref{defAE} is that it yields the coefficients in the regularized determinant expression, which can not immediately be derived from Theorem \ref{repZ1Thm}.
\end{rem}

\section{Superzeta functions constructed over non-trivial zeros of the Selberg zeta function on hyperbolic manifolds with cusps}

In this section we show how to apply Theorem \ref{repZ1Thm} to derive the meromorphic continuation of superzeta function constructed over the set of non-trivial zeros of the Selberg zeta function on a finite volume hyperbolic manifold with cusps. We also derive an expression for the regularized determinant associated to this superzeta function.

We will follow the notation from \cite{GonPark}, where the exact divisor of the Selberg zeta function is given.

Consider real hyperbolic space of dimension $d$, which can be realized as the symmetric space $G/K$, where $G=SO_0(d,1)$ and $K=SO(d)$. By $\Gamma$, we denote a discrete, torsion-free subgroup of $G$ such that $\vol(\Gamma\backslash G)$ is finite, where the volume is computed with respect to the standard normalized Haar measure on $G$. We assume that the metric induced from the Haar measure is such that the space $G/K$ has constant curvature equal to $-1$ and that the group $\Gamma$ satisfies the conditions $\Gamma_P=\Gamma \cap P = \Gamma \cap N(P)$ for  $P\in\mathfrak{P}_{\Gamma}$, which ensure that the resulting manifold $X_{\Gamma}= \Gamma \backslash G /K$ is a non-compact hyperbolic manifold with cusps. Here, $\mathfrak{P}_{\Gamma}$ is the set of all $\Gamma-$conjugacy classes of $\Gamma-$cuspidal parabolic subgroups in $G$ and $N(P)$ denotes the unipotent radical of $P$.

Let $\mathfrak{g}=\mathfrak{so}(d,1)$ and $\mathfrak{k} \cong \mathfrak{so}(d)$ be the Lie algebras of $G$ and $K$ respectively and let $\mathfrak{g} = \mathfrak{k} \oplus \mathfrak{p}$ be the decomposition of $\mathfrak{g}$ induced by the Cartan involution $\theta$. By $\mathfrak{a}$ we denote the (one-dimensional) maximal abelian subspace of $\mathfrak{p}$ and let $M\cong SO(d-1)$ be the centralizer of $A=\exp(\mathfrak{a})$. The positive restricted root of $(\mathfrak{g},\mathfrak{a})$ with the normalized Cartan-Killing norm equal to one is denoted by $\beta$, whereas $\mathfrak{n}$ denotes the positive root space of $\beta$.

Every hyperbolic element $\gamma\in\Gamma$ has the form $m_\gamma a_{\gamma} \in MA^{+}$, where $A^+= \{a\in A ~|~ a = \exp(tH), ~H \in \mathfrak{a},~ t>0\}$. The length of a closed geodesic $C_{\gamma}$ determined by $\gamma$ is denoted by $l(C_{\gamma})$, whereas $j(\gamma)$ is a positive integer such that $\gamma=\gamma_0^{j(\gamma)}$, for a primitive $\gamma_0$.

In the case when $d=2n$ or $d=2n+1$ and $l\neq n$, we denote by $\sigma_l$ the irreducible fundamental representation of $M=SO(d-1)$ acting on $\bigwedge^l (\CC^{d-1})$ and by $d(\sigma_l)$ the dimension of the representation space of $\sigma_l$. In the case when $d=2n+1$ and $l=n$, we denote by $\sigma_n^{\pm}$ the two irreducible half spin representations acting on $\bigwedge^n (\CC^{2n})$ and by $d(\sigma_n)$ the dimension of the representation space of $\sigma_n^{\pm}$.

Going further, let $\chi$ be a unitary representation of $\Gamma$ on a Hermitian vector space $V_{\chi}$ of a finite dimension $\dim V_{\chi}$. For $P \in \mathfrak{P}_{\Gamma}$, let $V_P \subseteq V_{\chi}$ denote the maximal subspace of $V_{\chi}$ on which $\left .\chi \right |_{\Gamma_P}$ acts trivially, and put $d_P(\chi)= \dim V_P$.

For $0\leq k \leq \lfloor(d-1)/2 \rfloor$, the Selberg zeta function $Z_{\chi}(\sigma_k, s)$, for $\Re(s)>d-1$ is defined by the absolutely convergent product

\begin{align*}
Z_{\chi}(\sigma_k, s)= \prod _{\gamma_{\Gamma} \in \mathrm{P}\Gamma_{\hyp}} \prod_{k=0}^{\infty} \det \left( \mathrm{Id} - \overline{\sigma_k(m_{\gamma})} \otimes \chi(\gamma) \otimes S^k (\mathrm{Ad}\left.(m_{\gamma} a_{\gamma})\right|_{\overline{\mathfrak{n}}})e^{-s l(C_{\gamma})} \right)
\end{align*}
\begin{align*}
= \exp\left(  -  \sum_{\gamma \in\Gamma_{\hyp}} \mathrm{Tr}\chi(\gamma) j(\gamma)^{-1} D(\gamma)^{-1} \overline{\mathrm{Tr} \sigma_k(m_{\gamma})} \exp(- (s-(d-1/2))l(C_{\gamma}) )\right).
\end{align*}
Here, $\mathrm{P}\Gamma_{\hyp}$ denotes the set of $\Gamma-$conjugacy classes of primitive hyperbolic elements of $\Gamma$, $\overline{\mathfrak{n}} = \theta\mathfrak{n}$, $S^k$ is the $k-$th symmetric power of $\mathrm{Ad}\left.(m_{\gamma} a_{\gamma})\right|_{\overline{\mathfrak{n}}}$ and
$$
D(\gamma)= D(m_\gamma a_{\gamma}) = \exp\left(\frac{d-1}{2} l(C_{\gamma}) \right) \left| \det (\mathrm{Ad}(m_{\gamma} a_{\gamma})^{-1} - \mathrm{Id}|_{\mathfrak{n}} ) \right|.
$$
Finally, we denote by $C_{\chi}^k(\sigma_k,s)$ the hyperbolic scattering operator (i.e. hyperbolic scattering matrix) defined in section 2.6. of \cite{GonPark}, see also \cite{Kelmer15} where the divisor of $C_{\chi}^k(\sigma_k,s)$ is derived.

With the above notation, we state the divisor of $Z_{\chi}(\sigma_k, s)$, for $0\leq k \leq \lfloor(d-1)/2 \rfloor$. The Selberg zeta function $Z_{\chi}(\sigma_k, s)$, defined for $\Re(s)>d-1,$ has a meromorphic continuation to $\CC$ with non-trivial zeros at
\begin{itemize}
\item $s=(d-1)/2 \pm i \lambda_j(k)$ of order $m_j(k)$, where $\lambda_j(k)^2 + ((d-1)/2-k)^2$ is an eigenvalue of the Hodge Laplacian $\Delta_k$ acting on the $L^2-$space of $k-$forms twisted by $\chi$, with multiplicity $m_j(k)$
    \item $s=(d-1)/2+q$ of order $d(\sigma_k)b$, where $s=q$ is a pole of $\det C_{\chi}^{k} (\sigma_k,s)$ with $\Re(q)<0$,  of order $b.$
\end{itemize}

The location of poles and trivial zeros depends on the parity of $d$. When $d=2n+1$ is odd, $Z_{\chi}(\sigma_k, s)$ has no trivial zeros, and has poles at
\begin{itemize}
\item $s=k$ of order $d_c(\chi)e(d,k) $,  where $d_c(\chi) = \sum_{P\in\mathfrak{P}_{\Gamma}} d_P(\chi)$ and $e(d,k)= (-1)^k\left( \sum_{j=0}^k (-1)^j d (\sigma_j)\right) \geq 0$
\item $s=n$ of order $\frac{1}{2} d(\sigma_k) \mathrm{Tr}\left(n(\sigma_k) \mathrm{Id} - C_{\chi}^{k} (\sigma_k,0) \right) =\frac{1}{2} d(\sigma_k) a_k $
    \item $s= n - q_j$ of order $d(\sigma_k)b_j$, where $s=q_j$ is a pole of $\det C_{\chi}^{k} (\sigma_k,s)$ with $0<q_j<n$,  of order $b_j$, for finitely many $j$, say $j=1,...,K$
    \item $s= n-l$, for $l\in\NN \setminus \{n-k\}$ of order $d_c(\chi)d(\sigma_k)$, when $k\neq n$ and in case $k=n$, order is $2d_c(\chi)d(\sigma_k).$
\end{itemize}

When $d=2n$ is even, $Z_{\chi}(\sigma_k, s)$ has poles at
\begin{itemize}
    \item $s= n -\frac{1}{2} - q_j$ of order $d(\sigma_k)b_j$, where $s=q_j$ is a pole of $\det C_{\chi}^{k} (\sigma_k,s)$ with $0<q_j<n$,  of order $b_j$, for finitely many $j$
    \item $s= n-\frac{1}{2}-l$, for $l\in\NN$ of order $d_c(\chi)d(\sigma_k)$
\end{itemize}
and possesses zeros or poles (according to the sign of their orders) at
\begin{itemize}
    \item $s=k$ of order $d_c(\chi)d(d,k) + (-1)^{k+1} \dim V_{\chi} E(X_{\Gamma}) $,  where $d(d,k)= d(\sigma_k)-e(d,k)\geq 0$
    \item $s= -l$, $l\in\NN$ of order $-\dim V_{\chi} E(X_{\Gamma}) \left[ \binom{2n+l-1}{l+k}\binom{l+k-1}{k} + \binom{2n+l-1}{k}\binom{2n+l-k-2}{l-1}  \right]$, where $E(X_{\Gamma})$ is the Euler characteristic of $X_{\Gamma}$.
\end{itemize}

We adopt the convention that in the case when the locations of  zeros or poles coincide, their orders are added.

Obviously, the function $Z_{\chi}(\sigma_k, s)$, for $0\leq k \leq \lfloor(d-1)/2 \rfloor$ is a zeta-type function.
Assume that $0\leq k \leq \lfloor(d-1)/2 \rfloor$ is fixed and denote by $\Z^\text{NT}_{f}(s,z)$ the superzeta function associated to the set of its non-trivial zeros, i.e. for $\Re(s)>d-1$ and $z\in\CC$ such that $z-s \notin \RR^-$, for all non-trivial (NT) zeros $s$ of $Z_{\chi}(\sigma_k, s)$ we have
\begin{multline*}
\Z^\text{NT}_{\chi, \sigma_k}(s,z) = \sum_{j}\left(\frac{m_j(k)}{(z-(d-1)/2 - i\lambda_j(k))^s} + \frac{m_j(k)}{(z-(d-1)/2 + i\lambda_j(k))^s} \right)\\ +\sum_{\Re(q)<0} \frac{d(\sigma_k)b}{(z-(d-1)/2 - q)^s}.
\end{multline*}

To define the superzeta function associated to the set of trivial zeros and poles we need to consider two cases, depending on the parity of $d$.

In case when $d$ is odd, the superzeta function associated to the set of poles (P) of $Z_{\chi}(\sigma_k, s)$ is defined for $\Re(s)>d-1$ and $z\in\CC$ such that $z-s \notin \RR^-$, for all poles $s$ of $Z_{\chi}(\sigma_k, s)$ by
\begin{align*}
\Z^\text{P}_{\chi, \sigma_k}(s,z)& = \frac{d_c(\chi)e(d,k)}{(z-k)^s} + \frac{1}{2} d(\sigma_k) \mathrm{Tr}\left(n(\sigma_k) \mathrm{Id} - C_{\chi}^{k} (\sigma_k,0) \right) (z-n)^{-s} \\
&+  \sum_{j=1}^{K}\frac{d(\sigma_k)b_j}{(z-(n-q_j))^s} + (1+\delta_{kn})d_c(\chi)d(\sigma_k) \zeta_H(s, z-n+1) \\&- (1-\delta_{kn})d_c(\chi)d(\sigma_k) (z-k)^{-s} .
\end{align*}
Note that $\zeta_H$ is the Hurwitz zeta function $\zeta_H(s,z) = \sum_{l=0}^{\infty}(z+l)^{-s};$ and 
$\delta_{kn}$ denotes the Kronecker delta function.

In the case when $d=2n+1$ is odd, we have the following corollary of Theorem~\ref{repZ1Thm}.

\begin{cor} \label{cor odd n}
For $d=2n+1$, $0\leq k \leq n$ and $z\in\CC$ such that $z-s \notin \RR^-$, for any non-trivial zero $s$ of $Z_{\chi}(\sigma_k, s)$ the superzeta function $\Z^\mathrm{NT}_{\chi, \sigma_k}(s,z)$ associated to the set of non-trivial zeros of $Z_{\chi}(\sigma_k, s)$, initially defined for $\Re(s)>2n+1,$ possesses a meromorphic continuation to the whole complex $s$-plane, with only one simple pole at $s=1$ with corresponding residue equal to  $\beta(k,n,\chi)=(1+\delta_{kn})d_c(\chi)d(\sigma_k)$.
Moreover, the zeta regularized product associated to this zeta function is given by
\begin{align} \notag
\exp\left(-\left. \frac{d}{ds}\Z^\mathrm{NT}_{\chi, \sigma_k}(s,z) \right|_{s=0} \right) &= (z-k)^{\alpha(k,n,\chi)}(z-n)^{\frac{1}{2}d(\sigma_k)a_k}\prod_{j=1}^{K}(z-(n-q_j))^{d(\sigma_k)b_j} \cdot \\ &\cdot \left( \frac{\Gamma(z-n+1)}{\sqrt{2\pi}}\right)^{-\beta(k,n,\chi)}Z_{\chi}(\sigma_k, z) \label{regulProd Sel zeta} ,
\end{align}
where $\alpha(k,n,\chi) = d_c(\chi)(e(d,k) - (1-\delta_{kn}) d(\sigma_k))$.

\end{cor}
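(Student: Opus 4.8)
The plan is to deduce the corollary from Theorem~\ref{repZ1Thm} applied to the zeta-type function $f(s)=Z_{\chi}(\sigma_k,s)$, by separating the contribution of the poles of $f$ from that of its zeros; for $d=2n+1$ odd, $f$ has no trivial zeros, so the pole contribution is the only correction term that appears.

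First I would record the structural facts: $f=Z_{\chi}(\sigma_k,\cdot)$ is a meromorphic zeta-type function of finite order, since its logarithmic derivative inherits a generalized Dirichlet series from the product over $\mathrm{P}\Gamma_{\hyp}$ with $q_{\gamma}=\exp(l(C_{\gamma}))>1$ and the divisor has polynomial counting function. Writing $\Z_{f}=\Z^{\mathrm{N}}_{f}-\Z^{\mathrm{P}}_{f}$ for the full superzeta of \eqref{Zeta1} and using that there are no trivial zeros, the splitting of Remark~\ref{remark} gives $\Z^{\mathrm{NT}}_{\chi,\sigma_k}(z,s)=\Z_{f}(s,z)+\Z^{\mathrm{P}}_{f}(s,z)$. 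The step that requires genuine work, and which I expect to be the main obstacle, is to verify that the explicit expression displayed just before the corollary for $\Z^{\mathrm{P}}_{\chi,\sigma_k}(z,s)$ really equals the superzeta $\Z^{\mathrm{P}}_{f}(s,z)$ of the pole divisor: the finitely many poles at $s=k$, $s=n$, and $s=n-q_j$ produce the elementary summands; the infinite progression of poles at $s=n-l$ with $l\in\NN\setminus\{n-k\}$, each of order $d_c(\chi)d(\sigma_k)$ and doubled when $k=n$, is summed via $(1+\delta_{kn})d_c(\chi)d(\sigma_k)\sum_{l\geq1}(z-(n-l))^{-s}=(1+\delta_{kn})d_c(\chi)d(\sigma_k)\,\zeta_H(s,z-n+1)$; the subtracted term $-(1-\delta_{kn})d_c(\chi)d(\sigma_k)(z-k)^{-s}$ removes the excluded index $l=n-k$; and the convention that orders at coinciding points are added is what makes the total order of the pole at $s=k$ equal to $d_c(\chi)e(d,k)$.

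Granting this identification, the meromorphic continuation follows at once. By Theorem~\ref{repZ1Thm}, $\Z_{f}(s,z)=\tfrac{\sin\pi s}{\pi}\,I(s,z)$ with $I(s,z)=\int_{0}^{\infty}\tfrac{f'}{f}(z+y)y^{-s}dy$ entire in $s$, so $\Z_{f}(s,z)$ is entire; among the summands of $\Z^{\mathrm{P}}_{\chi,\sigma_k}(z,s)$ the only one that is not entire is $\zeta_H(s,z-n+1)$, which is meromorphic with a single simple pole at $s=1$ of residue $1$. Hence $\Z^{\mathrm{NT}}_{\chi,\sigma_k}(z,s)$ continues meromorphically to $\CC$ with exactly one singularity, a simple pole at $s=1$ of residue $(1+\delta_{kn})d_c(\chi)d(\sigma_k)=\beta(k,n,\chi)$, which is the first assertion.

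Finally I would compute the special value at $s=0$. Since $\tfrac{\sin\pi s}{\pi}=s+O(s^{3})$ and $I(s,z)$ is regular at $s=0$, one has $\left.\tfrac{d}{ds}\Z_{f}(s,z)\right|_{s=0}=I(0,z)=\int_{0}^{\infty}\tfrac{f'}{f}(z+y)\,dy=-\log f(z)$, the last equality because $\log f(z+y)\to0$ as $y\to\infty$ by the Dirichlet series; thus $\exp\!\big(-\Z_{f}'(0,z)\big)=Z_{\chi}(\sigma_k,z)$. For $\Z^{\mathrm{P}}_{\chi,\sigma_k}$ I would differentiate termwise at $s=0$ using $\left.\tfrac{d}{ds}(z-c)^{-s}\right|_{s=0}=-\log(z-c)$ and Lerch's formula $\left.\tfrac{d}{ds}\zeta_H(s,a)\right|_{s=0}=\log\!\big(\Gamma(a)/\sqrt{2\pi}\big)$; the two summands supported at $s=k$ combine into the exponent $d_c(\chi)\big(e(d,k)-(1-\delta_{kn})d(\sigma_k)\big)=\alpha(k,n,\chi)$, and the exponentiated derivative of $\Z^{\mathrm{P}}_{\chi,\sigma_k}$ at $s=0$ becomes $(z-k)^{\alpha(k,n,\chi)}(z-n)^{\frac{1}{2}d(\sigma_k)a_k}\prod_{j=1}^{K}(z-(n-q_j))^{d(\sigma_k)b_j}\big(\Gamma(z-n+1)/\sqrt{2\pi}\big)^{-\beta(k,n,\chi)}$. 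Multiplying this by $\exp\!\big(-\Z_{f}'(0,z)\big)=Z_{\chi}(\sigma_k,z)$ and using $\Z^{\mathrm{NT}}_{\chi,\sigma_k}=\Z_{f}+\Z^{\mathrm{P}}_{\chi,\sigma_k}$ yields \eqref{regulProd Sel zeta}. Beyond the divisor bookkeeping flagged above, the only care needed in this last step is keeping the signs and exponents consistent; all the analytic substance is provided by Theorem~\ref{repZ1Thm} and the classical properties of the Hurwitz zeta function.
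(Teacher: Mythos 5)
Your proposal is correct and follows essentially the same route as the paper: write $\Z^{\mathrm{NT}}_{\chi,\sigma_k}=\Z_{f}+\Z^{\mathrm{P}}_{\chi,\sigma_k}$ (no trivial zeros for odd $d$), invoke Theorem~\ref{repZ1Thm} so that $\Z_f(s,z)=\tfrac{\sin\pi s}{\pi}I(s,z)$ is entire while the Hurwitz zeta term in $\Z^{\mathrm{P}}_{\chi,\sigma_k}$ supplies the single simple pole at $s=1$ with residue $\beta(k,n,\chi)$, and then differentiate at $s=0$ using Lerch's formula together with $\left.\tfrac{d}{ds}\Z_f(s,z)\right|_{s=0}=I(0,z)=-\log Z_{\chi}(\sigma_k,z)$. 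Your divisor bookkeeping (the $\zeta_H(s,z-n+1)$ summation of the poles at $s=n-l$ with the $(1\pm\delta_{kn})$ factors and the removal of $l=n-k$) agrees with the paper's definition of $\Z^{\mathrm{P}}_{\chi,\sigma_k}$, and you make explicit the evaluation $I(0,z)=-\log f(z)$ that the paper leaves implicit.
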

\begin{proof}
Remark \ref{remark}, Theorem \ref{repZ1Thm} together with definition of the superzeta function associated to the set of poles of $Z_{\chi}(\sigma_k, s)$ yield that
\begin{align} \notag
\Z^\text{NT}_{\chi, \sigma_k}(s,z)&= d_c(\chi)(e(d,k) - (1-\delta_{kn}) d(\sigma_k)) (z-k)^{-s} \\ \notag&+ d(\sigma_k)(a_k/2 - (1+\delta_{kn})d_c(\chi))(z-n)^{-s}\\ \label{Z non-triv}
&+ \sum_{j=1}^{K}\frac{d(\sigma_k)b_j}{(z-(n-q_j))^s} + (1+\delta_{kn})d_c(\chi)d(\sigma_k)\zeta_H(s, z-n) \\ \notag
&+ \frac{\sin \pi s}{\pi }\int_{0}^{\infty }\left( \frac{Z_{\chi} (\sigma_k, z+y)^{\prime }}{Z_{\chi}(\sigma_k, z+y)}\right) y^{-s}dy,
\end{align}\notag
and the above equation is valid for all admissible $z$, i.e. $z\in\CC$ such that $z-s \notin \RR^-$, for all non-trivial zeros $s$ of $Z_{\chi}(\sigma_k, s)$.

For a fixed, admissible $z$, all terms on the right-hand side of \eqref{Z non-triv}, except the Hurwitz zeta function are holomorphic in $s$. Therefore, for a fixed, admissible $z$, the function $\Z^\text{P}_{\chi, \sigma_k}(s,z)$ possesses a meromorphic continuation to the whole complex $s-$plane, with only one pole at $s=1$ and corresponding residue $(1+\delta_{kn})d_c(\chi)d(\sigma_k)$ stemming from the factor of the Hurwitz zeta function.

Differentiating equation \eqref{Z non-triv} with respect to the $s$ variable, inserting $s=0,$ and employing equation $\frac{d}{ds}\zeta_H(s,z)|_{s=0} = \log \Gamma(z) - \log \sqrt{2\pi} $, we get
\eqref{regulProd Sel zeta}.
\end{proof}

In the case when $d=2n$ is even, the superzeta function constructed over poles/trivial zeros is somewhat more complicated. First, we introduce multiple Hurwitz zeta function and multiple gamma functions.

For a positive integer $m$, the multiple Hurwitz zeta function is defined for $\Re(s)>m$ and $z\in\CC \setminus\{0,-1,-2,...\}$ by the absolutely convergent series
$$
\zeta_{m}(s,z) = \sum_{l=0}^{\infty} \binom{m+l-1}{l}(z+l)^{-s},
$$
which possesses a meromorphic continuation to the whole complex $s-$plane with simple poles at $s=1,2,...,m$ (see \cite{ChSr11}, p. 505). In \cite{ChSr11} it is also proved that the multiple Hurwitz zeta function is related to Hurwitz zeta function via the formula
$$
\zeta_m(s,z)= \sum_{j=0}^{m-1} p_{m,j}(z)\zeta_H(s-j,z),
$$
where $p_{m,j}(z)$ are polynomials in $z$ defined by
$$
p_{m,j}(z)= \frac{1}{(m-1)!} \sum_{l=j}^{m-1}(-1)^{m+1-j}\binom{l}{j}s(n,l+1)z^{l-j}
$$
and $s(n,l)$ is the Stirling number of the first kind, i.e. the coefficient in the expansion
$$
(z)_m= z(z+1)\cdots (z+m-1)= \sum_{l=0}^{m}(-1)^{m+l}s(m,l)z^l
$$
of the Pochhammer symbol $(z)_m$.

Since the Hurwitz zeta function $\zeta_H(s,z)$ possesses only one simple pole at $s=1$, with residue one, it is immediate that residues of $\zeta_{m}(s,z)$ at $s=k \in\{1,...,m\}$ are
$$
\mathrm{Res}_{s=k} \zeta_m(s,z)=p_{m,k-1}(z).
$$
The multiple gamma function $\Gamma_m$ is defined by
$$
\Gamma_m(z)= \exp\left( \left .\frac{d}{ds} \zeta_m(s,z) \right|_{s=0} \right).
$$

\begin{cor} \label{cor even n}
For $d=2n$, $0\leq k \leq n-1$ and $z\in\CC$ such that $z-s \notin \RR^-$, for any non-trivial zero $s$ of $Z_{\chi}(\sigma_k, s)$ the superzeta function $\Z^\mathrm{NT}_{\chi, \sigma_k}(s,z)$ associated to the set of non-trivial zeros of $Z_{\chi}(\sigma_k, s)$, initially defined for $\Re(s)>2n,$ possesses a meromorphic continuation to the whole complex $s$-plane, with simple poles at $s_r=r$, for $r\in\{1,...,2n\}$ with corresponding residues equal to
\begin{multline} \label{residues}
\mathrm{Res}_{s=r} \Z^\mathrm{NT}_{\chi, \sigma_k}(z, s) = \delta_{1r}d_c(\chi)d(\sigma_k)  \\ +\dim V_{\chi} E(X_{\Gamma})\sum_{j=0}^{k} (-1)^j\binom{2n}{k-j}\left( p_{2n,r-1} (z-j) + p_{2n,r-1}( z+j+1)\right).
\end{multline}
Moreover, the zeta regularized product associated to this superzeta function is given by
\begin{multline}\label{regulProd Sel zeta even}
\exp\left(-\left. \frac{d}{ds}\Z^\mathrm{NT}_{\chi, \sigma_k}(s,z) \right|_{s=0} \right) = Z_{\chi}(\sigma_k, z)\cdot (z-k)^{-\gamma(k,n,\chi)} \left( \prod_{j=1}^{K}(z-n-\frac{1}{2}-q_j)^{d(\sigma_k)b_j} \right) \\ \cdot \left( \frac{\Gamma(z-n+\frac{3}{2})}{\sqrt{2\pi}}\right)^{-d_c(\chi) d(\sigma_k)}\cdot\left( \prod_{j=0}^{K}  \left(\Gamma_{2n}(z-j) \Gamma_{2n}(z+j+1) \right) ^{(-1)^j \binom{2n}{k-j}} \right)^{\dim V_{\Gamma}E(X_{\Gamma}) },
\end{multline}
where $\gamma(k,n,\chi)=d_c(\chi)d(d,k) + (-1)^{k+1}(1-\dim V_{\chi} E(X_{\Gamma}))$.
\end{cor}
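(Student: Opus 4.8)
The plan is to follow the proof of Corollary~\ref{cor odd n}, the one genuinely new ingredient being a combinatorial identity that re‑packages the infinite family of trivial zeros of $Z_{\chi}(\sigma_k,s)$ at $s=-l$, $l\in\NN$, in terms of the multiple Hurwitz zeta function $\zeta_{2n}$; this is precisely what forces the multiple gamma functions $\Gamma_{2n}$ into \eqref{regulProd Sel zeta even} in place of the single $\Gamma$ of the odd‑dimensional case.

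First I would fix an admissible $z$ and, for $\Re(s)>2n$, split $\Z^{\mathrm{NT}}_{\chi,\sigma_k}(z,s)=\Z_{\chi,\sigma_k}(z,s)-\Z^{\mathrm{TP}}_{\chi,\sigma_k}(z,s)$ as in Remark~\ref{remark}, where $\Z_{\chi,\sigma_k}(z,s)$ is the full superzeta function of the zeta‑type meromorphic function $Z_{\chi}(\sigma_k,\cdot)$ and $\Z^{\mathrm{TP}}_{\chi,\sigma_k}(z,s):=\Z^{\mathrm{T}}_{\chi,\sigma_k}(z,s)-\Z^{\mathrm{P}}_{\chi,\sigma_k}(z,s)$ is the superzeta function attached to its trivial zeros and poles, each counted with its signed order (so that zeros and poles at coinciding points cancel, per the convention fixed above). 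By Theorem~\ref{repZ1Thm}, $\Z_{\chi,\sigma_k}(z,s)=\tfrac{\sin\pi s}{\pi}\int_{0}^{\infty}\tfrac{Z_{\chi}(\sigma_k,z+y)'}{Z_{\chi}(\sigma_k,z+y)}\,y^{-s}\,dy$ continues to an entire function of $s$. Reading the even‑$d$ divisor off the list above, $\Z^{\mathrm{TP}}_{\chi,\sigma_k}(z,s)$ is the sum of: finitely many power terms, namely $(d_c(\chi)d(d,k)+(-1)^{k+1})(z-k)^{-s}$ and the terms at $s=n-\tfrac12-q_j$ for $1\le j\le K$; the family at $s=n-\tfrac12-l$, $l\in\NN$, which equals $d_c(\chi)d(\sigma_k)\,\zeta_H(s,z-n+\tfrac32)$ by the defining shift of the Hurwitz zeta function; and the family at $s=-l$, $l\in\NN$, with coefficients $-\dim V_{\chi}E(X_\Gamma)\bigl[\binom{2n+l-1}{l+k}\binom{l+k-1}{k}+\binom{2n+l-1}{k}\binom{2n+l-k-2}{l-1}\bigr]$.

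The heart of the argument, and the step I expect to need the most care, is the identity
\begin{multline*}
\sum_{l\ge 1}\Bigl[\tbinom{2n+l-1}{l+k}\tbinom{l+k-1}{k}+\tbinom{2n+l-1}{k}\tbinom{2n+l-k-2}{l-1}\Bigr](z+l)^{-s} \\
=\sum_{j=0}^{k}(-1)^{j}\tbinom{2n}{k-j}\bigl(\zeta_{2n}(s,z-j)+\zeta_{2n}(s,z+j+1)\bigr)-(-1)^{k}(z-k)^{-s}.
\end{multline*}
I would prove this by expanding each $\zeta_{2n}(s,w)=\sum_{l'\ge 0}\binom{2n+l'-1}{l'}(w+l')^{-s}$ on the right, collecting the coefficient of $(z+m)^{-s}$ for each integer $m\ge -k$, and evaluating the resulting finite binomial sums by the Chu--Vandermonde convolution $\sum_{i}\binom{2n}{N-i}\binom{-2n}{i}=\binom{0}{N}=\delta_{N0}$ (after writing $\binom{2n+t-1}{t}=(-1)^{t}\binom{-2n}{t}$): this kills every coefficient with $-k\le m\le 0$ except the one at $m=-k$, which equals $(-1)^{k}$, and it reproduces the bracketed product of binomials for every $m\ge 1$. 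Substituting this identity into $\Z^{\mathrm{TP}}_{\chi,\sigma_k}(z,s)$ and invoking the standard meromorphic continuations of $\zeta_H$ (simple pole only at $s=1$, residue $1$) and of $\zeta_{2n}$ (simple poles at $s=1,\dots,2n$ with $\mathrm{Res}_{s=r}\zeta_{2n}(s,w)=p_{2n,r-1}(w)$) recalled above, while noting that the finitely many power terms and the entire term $\Z_{\chi,\sigma_k}(z,s)$ contribute no poles, yields the meromorphic continuation of $\Z^{\mathrm{NT}}_{\chi,\sigma_k}(z,s)$ to all of $\CC$ with at most simple poles at $s=r\in\{1,\dots,2n\}$. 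Reading off the residue at $s=r$, with the $\delta_{1r}$ term coming from $\zeta_H(s,z-n+\tfrac32)$ and the remaining terms from the $\zeta_{2n}(s,z-j)$ and $\zeta_{2n}(s,z+j+1)$, gives \eqref{residues}.

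For the regularized product I would differentiate the resulting closed form of $\Z^{\mathrm{NT}}_{\chi,\sigma_k}(z,s)$ in $s$ and set $s=0$, using $\frac{d}{ds}(z-a)^{-s}\big|_{s=0}=-\log(z-a)$; $\frac{d}{ds}\frac{\sin\pi s}{\pi}\big|_{s=0}=1$ together with $\int_{0}^{\infty}\frac{Z_{\chi}(\sigma_k,z+y)'}{Z_{\chi}(\sigma_k,z+y)}\,dy=-\log Z_{\chi}(\sigma_k,z)$ (legitimate because $\log Z_{\chi}(\sigma_k,z+y)\to 0$ as $y\to\infty$ by the Dirichlet series representation); $\frac{d}{ds}\zeta_H(s,w)\big|_{s=0}=\log\Gamma(w)-\log\sqrt{2\pi}$; and $\frac{d}{ds}\zeta_{2n}(s,w)\big|_{s=0}=\log\Gamma_{2n}(w)$ by the definition of the multiple gamma function. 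Exponentiating, and amalgamating the two $(z-k)^{-s}$ contributions, the explicit one of coefficient $d_c(\chi)d(d,k)+(-1)^{k+1}$ and the leftover $(-1)^{k}(z-k)^{-s}$ from the identity above, into a single factor whose exponent is governed by $\gamma(k,n,\chi)$, then produces \eqref{regulProd Sel zeta even}. Apart from the Chu--Vandermonde bookkeeping, the only delicate point is keeping consistent track of signs between the zeros and the poles of $Z_{\chi}(\sigma_k,\cdot)$, which is handled once and for all by the signed‑order convention fixed at the outset.
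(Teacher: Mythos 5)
Your proposal follows essentially the same route as the paper's proof: split off the trivial zeros and poles as in Remark~\ref{remark}, apply Theorem~\ref{repZ1Thm} to get the entire integral term, re-index the family at $s=-l$ into multiple Hurwitz zeta functions $\zeta_{2n}$ via the same binomial identity (your Chu--Vandermonde argument settles the coefficients for $-k\le m\le 0$ directly, while for $m\ge 1$ the truncated convolutions reduce to the two transformation formulas the paper quotes), and then read off the residues and the $s$-derivative at $s=0$ exactly as the paper does. The only blemish is a sign slip in your list for $\Z^{\mathrm{T}}-\Z^{\mathrm{P}}$: the pole families at $s=n-\tfrac12-q_j$ and $s=n-\tfrac12-l$ must enter with negative coefficients ($-d(\sigma_k)b_j$ and $-d_c(\chi)d(\sigma_k)\,\zeta_H(s,z-n+\tfrac32)$) under your signed-order convention, which is precisely what yields the $+\delta_{1r}d_c(\chi)d(\sigma_k)$ residue in \eqref{residues} and the exponent $-d_c(\chi)d(\sigma_k)$ on the factor $\Gamma(z-n+\tfrac32)/\sqrt{2\pi}$ in \eqref{regulProd Sel zeta even}.
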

\begin{proof}
From definition of divisor of $Z_{\chi}(\sigma_k, s)$, for all $z\in\CC$ such that $z-s \notin \RR^-$ for all $w$ which belong to the set of non-trivial zeros or poles of $Z_{\chi}(\sigma_k, s)$ we define for $\Re(s)>2n$ (following notation set in Remark 5)
\begin{multline*}
\Z^\text{P}_{\chi, \sigma_k}(z, s)-\Z^\text{T}_{\chi, \sigma_k}(s,z) = - \frac{d_c(\chi)d(d,k) + (-1)^{k+1}\dim V_{\Gamma}E(X_{\Gamma})}{(z-k)^{s}}\\ +\sum_{j=1}^{K}\frac{d(\sigma_k)b_j}{(z-(n-1/2-q_j))^s}+ d_c(\chi)d(\sigma_k) \zeta_H(s, z-n+3/2)\\
+ \dim V_{\chi} E(X_{\Gamma}) \sum_{l\in\NN}\left[ \binom{2n+l-1}{l+k}\binom{l+k-1}{k} + \binom{2n+l-1}{k}\binom{2n+l-k-2}{l-1}  \right](z+l)^{-s}
\end{multline*}

\noindent
Next, we employ the following transformation formulas for the binomial coefficients:
$$
\sum_{j=0}^{k-m}(-1)^{j}\binom{2n}{k-m-j}\binom{2n+j-1}{j}=0, \text{   for   } m\in\{0,...,k-1\};
$$
$$
\sum_{j=0}^{k}(-1)^{j}\binom{2n}{k-j}\binom{2n+m+j-1}{m+j}= \binom{2n+m-1}{m+k}\binom{m+k-1}{k}
$$
and
$$
\sum_{j=0}^{\min\{k, m-1\}}(-1)^{j}\binom{2n}{k-j}\binom{2n+m-j-2}{m-j-1}= \binom{2n+m-1}{k}\binom{2n+m-k-2}{m-1},
$$
for positive integers $m$ to deduce, by equating coefficients of $(z+m)^{-s}$, for $m-1 \geq k,$ that
\begin{multline*}
\sum_{l\in\NN}\left[ \binom{2n+l-1}{l+k}\binom{l+k-1}{k} + \binom{2n+l-1}{k}\binom{2n+l-k-2}{l-1}  \right](z+l)^{-s}= \\= \sum_{j=0}^{k} (-1)^j\binom{2n}{k-j}\left( \zeta_{2n} (s, z-j) + \zeta_{2n}(s, z+j+1)\right) + (-1)^{k+1}(z-k)^{-s}.
\end{multline*}

Therefore, application of Theorem \ref{repZ1Thm} yields the equation
\begin{multline*}
\Z^\text{NT}_{\chi, \sigma_k}(s,z) =   \sum_{j=1}^{K}\frac{d(\sigma_k)b_j}{(z-(n-1/2-q_j))^s}+ d_c(\chi)d(\sigma_k) \zeta_H(s, z-n+3/2) + \\  +\dim V_{\chi} E(X_{\Gamma})\sum_{j=0}^{k} (-1)^j\binom{2n}{k-j}\left( \zeta_{2n} (s, z-j) + \zeta_{2n}(s, z+j+1)\right) +\\+ \frac{(-1)^{k+1}(\dim V_{\chi} E(X_{\Gamma})-1)-d_c(\chi)d(d,k)}{(z-k)^s}+ \frac{\sin \pi s}{\pi }\int_{0}^{\infty }\left( \frac{Z_{\chi} (\sigma_k, z+y)^{\prime }}{Z_{\chi}(\sigma_k, z+y)}\right) y^{-s}dy.
\end{multline*}
For admissible values of $z$, function on the right-hand side of the above equation is meromorphic in $s$ with simple poles at $s\in\{1,...,2n\}$ and corresponding residues at $s=r\in\{1,...,2n\}$ given by equation \eqref{residues}, which proves the first part.

By taking derivatives with respect to the $s-$variable, inserting $s=0$ and
using the definition of multiple gamma function, we immediately deduce \eqref{regulProd Sel zeta even}.
\end{proof}

\begin{rem}  \rm
The zeta regularized product $D_{\chi,\sigma_k}(z)=\exp\left(-\left. \frac{d}{ds}\Z^\mathrm{NT}_{\chi, \sigma_k}(s,z) \right|_{s=0} \right)$ associated to the Selberg zeta function is evaluated in Corollaries \ref{cor odd n} and \ref{cor even n} for admissible values of $z$, i.e. $z\in\CC$ such that $z-s \notin \RR^-$, for any non-trivial zero $s$ of $Z_{\chi}(\sigma_k, s)$. The right-hand side of \eqref{regulProd Sel zeta} and \eqref{regulProd Sel zeta even} provides meromorphic continuation of $D_{\chi,\sigma_k}(z)$ to the cut plane $\CC \setminus(-\infty,n]$.
\end{rem}



\bibliographystyle{amsplain}

\begin{thebibliography}{99}













\bibitem{ChSr11}
J. Choi, H. M. Srivastava, \emph{The multiple Hurwitz zeta function and the multiple Hurwitz-Euler eta function,} Taiwanese J. Math. 15 (2011), 501 - 522.

\bibitem{Elstrodt}
Elstrodt, J{\"u}rgen and Grunewald, Fritz and Mennicke,  \emph{Groups acting on hyperbolic space,} Springer Monographs in Mathematics, Springer-Verlag, Berlin, 1998, Harmonic analysis and number theory.










\bibitem{Frid05} J. S. Friedman, \emph{The Selberg trace formula and Selberg zeta-function for cofinite Kleinian groups with finite-dimensional unitary representations.} Mathematische Zeitschrift, 250(4) (2005), 939-965.




\bibitem{Frid07} J. S. Friedman, \emph{Regularized determinants of the Laplacian
for cofinite Kleinian groups with finite-dimensional unitary
representations,} Comm. Math. Phys. 275 (2007), no. 3, 659--684.

\bibitem{FJS16} J. S. Friedman, J. Jorgenson, L. Smajlovi\'c, \emph{The determinant of the Lax-Phillips scattering operator,}
preprint (arxiv.org/abs/1603.07613).

\bibitem{FJS16B} J. S. Friedman, J. Jorgenson, L. Smajlovi\'c, \emph{An evaluation of the central value of the automorphic scattering determinant,}
preprint (arxiv.org/abs/1607.08053).


\bibitem{GonPark} Y. Gon, J. Park, \emph{The zeta functions of Ruelle and Selberg for hyperbolic manifolds with cusps,} Math. Ann. 346 (2010), 719 - 767.


\bibitem{GR07} I. S. Gradshteyn abd I. M. Rzyzhik, \emph{Table of integrals, series
and products,} seventh ed., Elsevier Academic Press, 2007.







\bibitem{Illies01} G. Illies, \emph{Regularized products and determinants,} Comm. Math. Phys. 220 (2001), 69-94.


\bibitem{JL93a} J. Jorgenson, S. Lang, \emph{On Cramer's theorem for
general Euler products with functional equation,} Math. Ann. 297 (1993),
383-416.


\bibitem{JL93b} J. Jorgenson, S. Lang, \emph{Basic analysis of regularized
series and products,} Lecture Notes in Mathematics 1564, 1-122, Spirnger
Verlag, 1993.


\bibitem{JLund95} J. Jorgenson, R. Lundelius,
\emph{Convergence theorems for relative spectral functions on hyperbolic Riemann surfaces of finite volume,} 
Duke Math. J. 80 (1995), 785-819.

\bibitem{JLund96} J. Jorgenson, R. Lundelius, \emph{Continuity of relative hyperbolic
spectral theory through metric degeneration,} Duke
Math. J. \bf 84 \rm (1996) 47-81.

\bibitem{JLund97} J. Jorgenson, R. Lundelius, \emph{Convergence of the normalized
spectral counting function on degenerating hyperbolic Riemann surfaces of
finite volume,} J. Funct. Anal. 149 (1997), no. 1, 25--57.


\bibitem{Kelmer15} D. Kelmer, \emph{On distribution of poles of Eisenstein series and the length spectrum of Hyperbolic manifolds,} Int. Math. Res. Notices 2015(23) (2015), 12319-12344.

\bibitem{KimWak04} K. Kimoto, M. Wakayama, \emph{Remarks on zeta regularized
products,} Int. Math. Res. Not. 17 (2004), 855-875.


\bibitem{KurWak04} N. Kurokawa, M. Wakayama, \emph{Zeta regularizations,}
Acta App. Math. 81 (2004), 147-166.

\bibitem{KurWakYam} N. Kurokawa, M. Wakayama, Y. Yamasaki, \emph{Milnor-Selberg
zeta functions and zeta regularizations,} J. Geom. Phys 64 (2013) no. 1, 120 - 145.

\bibitem{Lax-Phill76} P. Lax, R. Phillips, \emph{Scattering theory for automorphic
functions,} Annals of Mathematics Studies No. 87., Princeton Univ. Press,
Princeton, N.J., 1976.

\bibitem{LaxPhill80} P. Lax, R. Phillips, \emph{Scattering theory for automorphic
functions,} Bull. Amer. Math. Soc. (N.S.) 2 (1980), no. 2, 261--295.













\bibitem{Mull98} W. M\"{u}ller, \emph{Relative zeta functions, relative
determinants and scattering theory,} Comm. Math. Phys. 192 (1998), no. 2,
309--347.




\bibitem{PS92} R. Phillips, P. Sarnak, \emph{Pertubation theory for the laplacian
on automorphic functions,} J. Am. Math. Soc. 5 (1992), no. 1, 1-32.




\bibitem{RaySing73} D. Ray, I. Singer, \emph{Analytic torsion for analytic
manifolds,} Ann. Math. 98 (1973), 154-177.








\bibitem{Voros1} A. Voros, \emph{Spectral functions, special functions and
the Selbeg zeta functions,} Commun. Math. Phys. 110 (1987), 439-465.

\bibitem{Voros2} A. Voros, \emph{Zeta functions for the Riemann zeros,}
Ann. Inst. Fourier (Grenoble) 53 (2003), 665 - 699.

\bibitem{Voros3} A. Voros, \emph{More Zeta functions for the Riemann zeros
I,} In: Frontiers in Number Theory, Physics and Geometry, Springer Berlin
(2006), 349-363.

\bibitem{VorosKnjiga} A. Voros, \emph{Zeta functions over Zeros of Zeta
Functions,} Lecture Notes of the Unione Matematica Italiana, Spirnger Verlag,
2010.







\end{thebibliography}

\end{document}